\documentclass[final,leqno,onefignum,onetabnum]{siamltex1213}

\usepackage{rotating}
\usepackage{url}
\usepackage{float}
\usepackage{graphicx}
\usepackage{color}
\usepackage{latexsym}
\usepackage{amsfonts}
\usepackage{amsmath}
\usepackage{psfrag}
\usepackage{mathrsfs}
\usepackage{algpseudocode}
\usepackage[ruled]{algorithm}

\usepackage{caption}
\usepackage{subcaption}

\newtheorem{remark}{Remark}

\def\R{\mathbb{R}}

\def\N{\mathbb{N}}

\def\meno{\par\medskip\noindent}

\def\eps{\epsilon}

\newtheorem{ass}{Assumption}

\title{A Fast  Active Set Block Coordinate Descent Algorithm for $\ell_1$-regularized least squares}
\author{ M. De Santis$^\dag$, S. Lucidi$^\ddag$, F. Rinaldi$^*$}

\author{Marianna~De~Santis\thanks{Fakult\"at f\"ur Mathematik, Technische Universit\"at Dortmund, Vogelpothsweg 87, 44227 Dortmund, Germany
\email{marianna.de.santis@tu-dortmund.de}}
  \and
  Stefano~Lucidi\thanks{Dipartimento di Ingegneria Informatica
    Automatica e Gestionale, Sapienza Universit\`{a} di Roma, Via
    Ariosto, 25, 00185 Roma, Italy
    \email{stefano.lucidi@dis.uniroma1.it}}
  \and
  Francesco~Rinaldi\thanks{Dipartimento di Matematica, Universit\`a
    di Padova, Via Trieste, 63, 35121 Padova, Italy
    \email{rinaldi@math.unipd.it}}
}

\begin{document}
\maketitle
\slugger{mms}{xxxx}{xx}{x}{x--x}

\begin{abstract}
The problem of finding sparse solutions to underdetermined systems of linear equations arises
in several applications (e.g. signal
and image processing, compressive sensing, statistical inference).
A standard tool for dealing with sparse recovery is the
$\ell_1$-regularized least-squares approach that has
been recently attracting the attention of many researchers. 

In this paper, we describe an active set estimate (i.e. an estimate of the indices of the zero variables in the optimal solution) for the
considered problem that tries to  quickly identify as many active variables as possible at a given point, while guaranteeing that some approximate 
optimality conditions are satisfied. A relevant feature of the estimate is that it gives a significant reduction of the objective function 
when setting to zero all those variables estimated active. This enables to easily embed it into a given globally converging 
algorithmic framework.

In particular, we include our estimate into a block coordinate descent algorithm for $\ell_1$-regularized least squares,  analyze the convergence properties of 
this new active set method, and prove that its basic version converges with linear rate. 

Finally, we report some numerical results showing the effectiveness of the approach.


\end{abstract}

\begin{keywords}$\ell_1$-regularized least squares, active set, sparse optimization\end{keywords}

  \begin{AMS}65K05, 90C25, 90C06\end{AMS}

    \pagestyle{myheadings}
    \thispagestyle{plain}
    \markboth{M. DE SANTIS, S. LUCIDI, F. RINALDI}{AN ACTIVE SET BCD METHOD FOR $\ell_1$-REGULARIZED LEAST SQUARES}

\section{Introduction}\label{introduction}
The problem of finding sparse solutions to large underdetermined linear systems of equations has received a
lot of attention in the last decades. This is due to the fact that several real-world applications 
can be formulated as linear inverse problems.
A standard approach is the so called $\ell_2$-$\ell_1$ unconstrained optimization problem:
\begin{equation}\label{l2l1}
\min_{x\in \R^n} \, \frac{1}{2}\|Ax - b\|^2 + \tau \|x\|_1,
\end{equation}
where $A\in \R^{m\times n}$, $b\in \R^m$, $x\in \R^n$ $(m<n)$ and $\tau\in \R^+$. We denote by $\|\cdot\|$ the standard
$\ell_2$ norm and by  $\|\cdot\|_1$ the $\ell_1$ norm defined as $\|x\|_1=\sum_{i=1}^n |x_i|$.

Several classes of algorithms
have been proposed for the solution of Problem~(\ref{l2l1}). Among the others, we would like to remind Iterative
Shrinkage/Thresholding (IST) methods (see e.g. \cite{FISTA,TwIST,CombettesWajs,Daubechies,SPARSA}), Augmented Lagrangian
Approaches (see e.g. \cite{salsa}), Second Order Methods (see e.g. \cite{nocedal,gondzio}), Sequential Deterministic  (see e.g.
\cite{tseng0,tseng,yun}) and Stochastic (see e.g. \cite{fountoulakis,ric2} and references therein) Block Coordinate Approaches, Parallel
Deterministic (see e.g. \cite{facchinei} and references therein) and Stochastic (see e.g. \cite{facchineiNonconvex,ric} and references therein)
Block Coordinate Approaches, and Active-set strategies  (see e.g. \cite{GriesseLorenz,Wenetal1,Wenetal2}).

The main feature of this class of problems is the fact that the optimal solution is usually very sparse (i.e. it has many zero
components). Then, quickly building and/or correctly identifying the active set (i.e. the subset of zero components in an
optimal solution) for Problem~(\ref{l2l1}) is becoming a crucial task in the context of Big Data Optimization, since  it can
guarantee relevant savings in terms of CPU time. As a very straightforward example, we can consider a huge scale problem having a
solution with just a few nonzero components. In this case, both the fast construction and the correct identification of the
active set can considerably reduce the complexity of the problem, thus also giving us the chance to use  more sophisticated
optimization
methods than the ones usually adopted.Various attempts have been made in order to use active set technique in the context of $\ell_1$-regularized problems. 

In \cite{Wenetal1,Wenetal2}, Wen et al. proposed a two-stage algorithm, FPC-AS, where an estimate of the active variables set is
driven by using a first-order iterative shrinkage method. 

In \cite{Wright}, a block-coordinate relaxation approach with proximal linearized subproblems yields convergence to critical
points, while identification of the optimal manifold (under a nondegeneracy condition) allows acceleration techniques
to be applied on a reduced space.

In \cite{Hsieh}, the authors solve an $\ell_1$-regularized log determinant program related to the problem of sparse inverse
covariance matrix estimation combining a second-order approach with a technique to correctly identifying the active set.

An efficient version of the  two-block nonlinear constrained Gauss-Seidel algorithm that at each iteration
fixes some variables to zero according to a simple active set  rule has been proposed
in \cite{porcelli} for solving $\ell_1$-regularized least squares.

In a recent paper \cite{nocedal}, Nocedal et al. described an interesting family of second order methods for
$\ell_1$-regularized convex problems. Those methods  combine  a semi-smooth Newton approach
with a mechanism to identify the active manifold in the given problem. 

In the case one wants to solve very large problems, Block Coordinate Descent Algorithms (both Sequential and Parallel)
represent a very good alternative and, sometimes, the best possible answer \cite{tseng}. An interesting Coordinate Descent algorithm combining
a Newton steps with a line search technique was described by  Yuan et al. in \cite{yuan}. In this context, the authors also
proposed a shrinking technique (i.e. a heuristic strategy that tries to fix to zero a subset of variables according to a certain
rule), which can be seen as a way to identify the active variables. In \cite{tseng}, some ideas on how to  speed up their Block
Coordinate Descent Algorithm by including an active set  identification strategy are described, but no theoretical analysis is
given for the resulting approach.

What we want to highlight here is that all the approaches listed above, but the one described in \cite{nocedal}, 
estimate the final active set by using the current active set and perform subspace minimization on the remaining variables. 
In \cite{nocedal}, the authors define an estimate
that performs multiple changes in the active manifold by also including variables that are nonzero at a given point and satisfy
some specific condition. Since this active set mechanism, due to the aggressive changes in the index set, can cause 
cycling, including the estimate into a globally converging algorithmic framework is not always straightforward. 

In this work, we adapt the active set estimate proposed in \cite{facchineilucidi} for constrained optimization problems to the
$\ell_1$-regularized least squares case. Our estimate, similarly to the one proposed in \cite{nocedal}, does not only focus on
the zero variables of a given point. Instead it tries to quickly identify as many active variables as possible (including  the
nonzero variables of the point), while guaranteeing that some approximate optimality conditions are satisfied. 

The main feature of the proposed active set  strategy is that
 a significant reduction of the objective function is obtained when setting to zero all those variables estimated active.
This global property, which is strongly related to the fact that the components estimated active satisfy an approximate optimality condition, makes 
easy to use the estimate into a given globally converging algorithmic framework.

Furthermore, inspired by the papers \cite{tseng,yuan,yun}, we describe a new Block Coordinate Descent Algorithm that embeds 
the considered active set  estimate. At each iteration, the method first sets to zero the active variables, then uses a 
decomposition strategy for updating a bunch of the non-active ones. 
On the one hand, decomposing the non-active variables enables to handle huge scale problems
that other active set approaches cannot solve in reasonable time.
On the other hand, since the subproblems analyzed at every iteration explicitly take into account the 
$\ell_1$-norm, the proposed algorithmic framework  does not require a sign identification strategy
(for the non-active variables), which is tipically needed when using other active set methods from the literature.

The paper is organized as follows. In Section~\ref{estimate}, we introduce our active set strategy. In Section~\ref{algorithm},
we describe the active set coordinate descent algorithm, and prove its convergence. We further analyze the convergence rate of the algorithm.
In Section~\ref{numres}, we report some
numerical results showing the effectiveness of the approach. Finally, we draw some conclusions in Section~\ref{conclusions}.
\section{Notation and Preliminary Results}\label{prelim}
Throughout the paper we denote  by $f(x)$, $q(x)$, $g(x)$ and $H$ the original function in Problem~\eqref{l2l1}, the quadratic term of 
the objective function in Problem~\eqref{l2l1}, the $n$ gradient vector
and the $n\times n$ Hessian matrix of $\displaystyle\frac{1}{2} \|Ax - b\|^2$ respectively.
Explicitly
$$q(x)=\frac{1}{2} \|Ax - b\|^2, \quad g(x)= A^\top(Ax - b), \quad H = A^\top A.$$
Given a matrix $Q \in \R^{n \times n}$, we further denote by $\lambda_{max}(Q)$ and $\lambda_{min}(Q)$ the maximum 
and the minimum eigenvalue of the matrix $Q$, respectively. Furthermore, with $I$ we indicate the set of indices 
$I=\{1,\dots,n\}$, and with 
$Q_{I_j I_j}$ we indicate the submatrix of $Q$ whose rows and columns indices are in $I_j\subseteq I$. 
We also report the optimality conditions for Problem~\eqref{l2l1}:
\begin{proposition}
 $x^\star\in \R^n$ is an optimal solution of Problem~\eqref{l2l1} if and only if
 \begin{equation}\label{optcondorpr}
 \left \{
 \begin{array}{ll}
  x_i^\star>0, &g_i(x^\star)+\tau=0\\
  x_i^\star<0, &g_i(x^\star)-\tau=0\\
  x_i^\star=0, & -\tau\leq g_i(x^\star)\leq\tau.
 \end{array}
\right .
 \end{equation}
 \end{proposition}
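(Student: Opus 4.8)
The plan is to exploit the convexity of the objective and translate the standard first-order optimality condition for convex nonsmooth problems into the componentwise form stated above. Write $f(x) = q(x) + \tau\|x\|_1$. Since $q$ is a convex quadratic and $\|\cdot\|_1$ is convex, $f$ is convex; hence $x^\star$ is a global minimizer if and only if $0$ belongs to the subdifferential $\partial f(x^\star)$. Because $q$ is continuously differentiable, the sum rule for subdifferentials applies exactly (the subdifferential of the smooth summand is the singleton $\{\nabla q\}=\{g\}$), giving $\partial f(x^\star) = g(x^\star) + \tau\,\partial\|x^\star\|_1$.

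The second step is to make the subdifferential of the $\ell_1$-norm explicit. Since $\|x\|_1 = \sum_{i=1}^n |x_i|$ is separable, its subdifferential is the Cartesian product of the one-dimensional subdifferentials of $|\cdot|$, namely $\partial|t| = \{1\}$ for $t>0$, $\partial|t| = \{-1\}$ for $t<0$, and $\partial|t| = [-1,1]$ for $t=0$. Therefore the inclusion $0 \in g(x^\star) + \tau\,\partial\|x^\star\|_1$ reads, coordinate by coordinate, as follows: there exists $s \in \R^n$ with $s_i = 1$ when $x_i^\star > 0$, $s_i = -1$ when $x_i^\star < 0$, $s_i \in [-1,1]$ when $x_i^\star = 0$, and $g_i(x^\star) + \tau s_i = 0$ for every $i$.

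Reading off the three cases then yields the claim directly: for $x_i^\star>0$ we get $g_i(x^\star) + \tau = 0$; for $x_i^\star<0$ we get $g_i(x^\star) - \tau = 0$; and for $x_i^\star=0$ the condition $g_i(x^\star) = -\tau s_i$ with $s_i \in [-1,1]$ is equivalent to $-\tau \le g_i(x^\star) \le \tau$. Both implications of the ``if and only if'' come for free, since the subdifferential characterization of a minimizer of a convex function is an equivalence.

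I expect essentially no obstacle here; the only points requiring a small justification are (i) the validity of the additive subdifferential rule, which is immediate because one summand is everywhere differentiable, and (ii) the componentwise description of $\partial\|\cdot\|_1$, which follows from separability. As an entirely self-contained alternative avoiding subdifferential calculus, one could instead use that for convex $f$ the point $x^\star$ is optimal if and only if the directional derivative $f'(x^\star;d) = g(x^\star)^\top d + \tau\sum_{i:\,x_i^\star>0}d_i - \tau\sum_{i:\,x_i^\star<0}d_i + \tau\sum_{i:\,x_i^\star=0}|d_i| \ge 0$ holds for all $d \in \R^n$, and then test the coordinate directions $d = \pm e_i$ to recover exactly the three stated conditions.
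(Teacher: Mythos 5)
Your proof is correct. Note that the paper itself offers no proof of this proposition --- it is simply ``reported'' as a standard fact in the preliminaries --- so there is no argument of the authors' to compare against; your subdifferential derivation (optimality iff $0\in g(x^\star)+\tau\,\partial\|x^\star\|_1$, with the sum rule justified by differentiability of the quadratic term and the componentwise description of $\partial\|\cdot\|_1$ by separability) is the canonical one and settles both implications at once. The closest thing in the paper to a proof route is the reformulation in Section~3 as a bound-constrained problem in $(u,v)$ with multipliers $\lambda_i^\star=g_i(x^\star)+\tau$ and $\mu_i^\star=\tau-g_i(x^\star)$; reading off the KKT conditions of that reformulation would give an equivalent, slightly more roundabout derivation of the same three cases. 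One small remark on your self-contained alternative: testing only $d=\pm e_i$ establishes necessity of the three conditions, and for sufficiency you should add the (one-line) observation that the directional derivative splits as a sum over coordinates, each summand being nonnegative under the stated conditions; your primary subdifferential argument does not need this caveat.
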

 Furthermore, we define a continuous function $\Phi_i(x)$ that measures the violation of the optimality conditions in $x_i$ (and is connected
to the Gauss-Southwell-r rule proposed in \cite{tseng}), that is
\begin{equation}\label{tsengest}
\Phi_i(x)=-\mbox{mid}\left\{ \frac{g_i(x)-\tau}{H_{ii}}, x_i, \frac{g_i(x)+\tau}{H_{ii}}\right\},
\end{equation}
where mid$\{a,b,c\}$ indicates the median of $a,\ b,\ c$.

 Finally, we recall the concept of strict complementarity.
 \begin{definition}\label{stc}
 Strict complementarity holds if, for any $x^\star_i=0$, we have
 \begin{equation}
  -\tau<g_i(x^\star)<\tau.
 \end{equation}

 \end{definition}
\section{Active set estimate}\label{estimate}
All the algorithms that adopt active set strategies need to estimate a particular subset of components of the optimal solution $x^\star$.
In nonlinear constrained minimization problems, for example, using an active set strategy usually means correctly identifying the set of active constraints at the solution.
In our context, we deal with Problem~(\ref{l2l1}) and the active set is considered as the subset of zero-components of $x^\star$.
\begin{definition}\label{def:activeset}
Let $x^\star\in \R^n$ be an optimal solution for Problem~(\ref{l2l1}).
We define the active set as follows:
\begin{equation}\label{AS}
{\cal \bar A}(x^\star)=\big\{i\in I: x^\star_i=0\big\}.
\end{equation}
We further define as non-active set the complementary set of ${\cal \bar A}(x^\star)$:
\begin{equation}\label{NAS}
 {\cal \bar N}(x^\star)=I\setminus {\cal \bar A}(x^\star)=\big\{i\in \{1,\ldots, n\}:x^\star_i\neq 0\big\}.
\end{equation}
\end{definition}
In order to get an estimate of the active set we rewrite Problem~(\ref{l2l1}) as a box constrained programming problem and we use
similar ideas to those proposed in \cite{DeSantisDiPilloLucidi}.

Problem~(\ref{l2l1}) can be equivalently rewritten as follows:
\begin{equation}
\begin{array}{l l}\label{prob1}
\min & \frac{1}{2}\|A(u-v)-b\|^2 +\tau \sum_{i=1}^n (u_i+v_i)\\
& u\ge 0\\
& v\ge 0,
\end{array}
\end{equation}
where $u,v \in \R^n$.
Indeed, we can transform a solution $x^\star\in \R^n$ of Problem~\eqref{l2l1} into a solution $(u^\star, v^\star)\in \R^n\times \R^n$ of \eqref{prob1}  by using the following transformation:
$$
\begin{array}{l}
u^\star=\max(0,x^\star),\qquad v^\star=\max(0,-x^\star).
\end{array}
$$
Equivalently, we can transform  a solution $(u^\star, v^\star)\in \R^n\times \R^n$ of \eqref{prob1} into a solution $x^\star\in \R^n$ of Problem~\eqref{l2l1} by using the following transformation:
$$
x^\star=u^\star-v^\star.
$$
The Lagrangian function associated to \eqref{prob1} is
$${\cal L} (u,v,\lambda,\mu)= \frac{1}{2}\|A(u-v)-b\|^2 +\tau \sum_{i=1}^n (u_i+v_i) -\lambda^\top u - \mu^\top v,$$
with $\lambda, \mu \in \R^n$ vectors of Lagrangian multipliers.
Let $(u^\star,v^\star,\lambda^\star,\mu^\star)$ be an optimal solution of Problem~(\ref{prob1}).
Then, from necessary optimality conditions, we have
\begin{equation} \label{lambdamu}
\begin{array}{l}
\lambda^\star_i = g_i(u^\star-v^\star) +\tau\,  =  g_i(x^\star) +\tau\,;\\
\\
\mu^\star_i =  \tau\, -g_i(u^\star-v^\star) = \tau\, -g_i(x^\star).
\end{array}
\end{equation}
\par\meno
From \eqref{lambdamu}, we can introduce the following two multiplier functions
\begin{equation} \label{lambdamumult}
\begin{array}{l}
\lambda_i(u,v) = g_i(u-v) +\tau\,;\\
\\
\mu_i(u,v) =  \tau\, -g_i(u-v).
\end{array}
\end{equation}
By means of the multiplier functions, we can recall the  non-active set estimate ${\cal  N}(u,v)$ and active set estimate ${\cal  A}(u,v)$
proposed in the field of constrained smooth optimization  (see \cite{facchineilucidi} and references therein):
\begin{equation} \label{nas1}
{\cal  N}(u,v)= \{i: u_i>\epsilon\,\lambda_i(u,v)\}\cup\{i: v_i>\epsilon\,\mu_i(u,v)\},
\end{equation}
\begin{equation}\label{as1}
 {\cal A}(u,v)=I\setminus {\cal N}(u,v),
\end{equation}
where $\epsilon$ is a positive scalar.

We draw inspiration from \eqref{nas1} and \eqref{as1} to propose the new estimates of active and non-active set for Problem~\eqref{l2l1}. Indeed, by using the  relations
$$
\begin{array}{l}
u=\max(0,x)\qquad \mbox{and} \qquad v=\max(0,-x),
\end{array}
$$
we can give the following definitions.
\begin{definition}\label{def:est}
Let $x\in \R^n$. We define the following sets as estimate of the non-active and active variables sets:
\begin{equation}\label{eq:N(x^*)}
{\cal N}(x)=\{i: \max(0,x_i)>\epsilon\,(\tau+g_i(x)) \} \cup \{i: \max(0,-x_i)>\epsilon\,(\tau-g_i(x)) \},
\end{equation}
\begin{equation}
{\cal A}(x)=I\setminus {\cal N}(x).
\end{equation}
\end{definition}
In the next Subsections, we first discuss local and global properties of our estimate, then we 
compare it with other active set estimates.

\subsection{Local properties of the active set estimate}\label{subslocalpropr}
Now, we describe some local properties (in the sense that those properties only hold into a neighborhood of a given point)
of our active set estimate. In particular, the following theoretical result states that when the point is sufficiently close to an optimal solution
the related active set estimate is a subset of the active set calculated in the optimal point (and it includes the optimal active variables that satisfy strict complementarity). 
Furthermore, when strict complementarity holds the active set estimate is actually equal to the optimal active set. 
\begin{theorem}\label{activeest}
 Let $x^\star\in \R^n$ be an optimal solution of Problem~\eqref{l2l1}. Then, there
exists a neighborhood of $x^\star$ such that, for each $x$ in this neighborhood, we have
\begin{equation}\label{stcmp}
{\cal \bar A}^+(x^\star)\subseteq{\cal A} (x) \subseteq{\cal \bar A}(x^\star),
\end{equation}
with
${\cal \bar A}^+(x^\star)={\cal \bar A}(x^\star)\cap \{i : -\tau<g_i(x^\star)<\tau\}$.\\
Furthermore, if strict complementarity  \eqref{stc} holds in $x^\star$, then there
exists a neighborhood of $x^\star$ such that, for each $x$ in this neighborhood, we have
\begin{equation}\label{stcmp2}
{\cal A} (x) ={\cal \bar A}(x^\star).
\end{equation}
\end{theorem}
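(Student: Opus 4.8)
The plan is to exploit the continuity of the affine gradient $g(x)=A^\top(Ax-b)$ together with the optimality conditions \eqref{optcondorpr} holding at $x^\star$, arguing one index at a time. Since $I$ is finite, it is enough to exhibit for each $i$ a neighborhood of $x^\star$ on which the membership of $i$ in either ${\cal A}(x)$ or ${\cal N}(x)$ is constant; the intersection of these finitely many neighborhoods then provides the neighborhood claimed in the statement. I would split $I$ according to the value of $x_i^\star$, using \eqref{optcondorpr} to pin down $g_i(x^\star)$ in each case.

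To obtain the upper inclusion ${\cal A}(x)\subseteq\bar{\cal A}(x^\star)$, I would show that every non-active index (i.e. $x_i^\star\neq 0$) enters ${\cal N}(x)$ for $x$ near $x^\star$. If $x_i^\star>0$, then \eqref{optcondorpr} forces $g_i(x^\star)=-\tau$, so $\tau+g_i(x^\star)=0$; by continuity $x_i$ remains bounded away from $0$ while $\epsilon\,(\tau+g_i(x))\to 0$, and hence the first inequality in the definition of ${\cal N}(x)$ (see Definition~\ref{def:est}) is satisfied. The case $x_i^\star<0$ is symmetric: there $g_i(x^\star)=\tau$, so $\tau-g_i(x^\star)=0$, and the second inequality is triggered instead. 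In both cases $i\in{\cal N}(x)$, i.e. $i\notin{\cal A}(x)$.

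For the lower inclusion $\bar{\cal A}^+(x^\star)\subseteq{\cal A}(x)$, I would take $i\in\bar{\cal A}^+(x^\star)$, so that $x_i^\star=0$ and $-\tau<g_i(x^\star)<\tau$; consequently both $\tau+g_i(x^\star)$ and $\tau-g_i(x^\star)$ are strictly positive. Near $x^\star$ the two right-hand sides $\epsilon\,(\tau\pm g_i(x))$ stay bounded away from $0$, whereas $\max(0,x_i)$ and $\max(0,-x_i)$ both tend to $0$; thus neither inequality defining ${\cal N}(x)$ can hold, giving $i\notin{\cal N}(x)$, that is $i\in{\cal A}(x)$. Intersecting the finitely many neighborhoods produced in the last two paragraphs yields \eqref{stcmp}.

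The second assertion then follows immediately: under strict complementarity \eqref{stc} every index with $x_i^\star=0$ satisfies $-\tau<g_i(x^\star)<\tau$, so $\bar{\cal A}^+(x^\star)=\bar{\cal A}(x^\star)$ and the chain \eqref{stcmp} collapses to the equality \eqref{stcmp2}. The only genuinely delicate indices are the borderline active ones with $x_i^\star=0$ and $|g_i(x^\star)|=\tau$, which occur exactly when strict complementarity fails: for such $i$ one of the quantities $\tau\pm g_i(x)$ vanishes at $x^\star$, so the corresponding inequality in the definition of ${\cal N}(x)$ cannot be settled by a continuity argument alone. This is precisely why these indices are excluded from $\bar{\cal A}^+(x^\star)$ and why the first inclusion need not be an equality in general.
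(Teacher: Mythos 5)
Your proof is correct. It differs from the paper's, though: the paper does not argue directly at all, but simply invokes Theorem~2.1 of the cited work of Facchinei and Lucidi, relying on the reformulation of Problem~\eqref{l2l1} as the box-constrained problem~\eqref{prob1} and the multiplier functions \eqref{lambdamumult} to transplant an identification result from smooth constrained optimization. Your argument is instead a self-contained continuity check, index by index: you use the optimality conditions \eqref{optcondorpr} to pin down $g_i(x^\star)=\mp\tau$ when $x_i^\star\gtrless 0$ (so the relevant threshold $\epsilon(\tau\pm g_i(x))$ vanishes in the limit while $|x_i|$ stays bounded away from zero), and strict positivity of both thresholds when $i\in\bar{\cal A}^+(x^\star)$ (so both defining inequalities of ${\cal N}(x)$ eventually fail), then intersect finitely many neighborhoods. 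What the paper's route buys is brevity and a direct link to the established theory of active-set identification for inequality-constrained problems; what yours buys is transparency and independence from the $(u,v)$ splitting, and it makes explicit exactly where the argument breaks for the borderline indices with $x_i^\star=0$ and $|g_i(x^\star)|=\tau$, which is why they are excluded from $\bar{\cal A}^+(x^\star)$ and why strict complementarity upgrades the inclusions to the equality \eqref{stcmp2}. No gaps.
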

\begin{proof}
The proof follows from  Theorem 2.1  in \cite{facchineilucidi}.
\hfill
\end{proof}
\par\medskip

\subsection{A global property of the active set estimate}\label{subsglobpropr}
Here, we analyze a global property of the active set estimate. In particular, we show that, for a suitably chosen value of the 
parameter $\eps$ appearing in Definition~\ref{def:est}, by starting from a point $z \in \R^n$ and fixing to zero all variables whose indices belong to the active set estimate ${\cal A}(z)$,
it is possible to obtain a significant decrease of the objective function. This property, which strongly depends on the specific structure of the problem under analysis, represents a new interesting 
theoretical result, since it  enables to easily embed the active set estimate into any globally 
converging algorithmic framework (in the next section, we will show how to include it into a specific Block Coordinate Descent method). 
Furthermore, the global property cannot be deduced from the theoretical results already reported in \cite{facchineilucidi}.\\

\begin{ass}\label{ass1}
Parameter $\eps$ appearing in Definition~\ref{def:est} satisfies the following condition:
\begin{equation}\label{eps-prop2} 0 < \epsilon <\frac{1}{\lambda_{max}(A^\top A)}.\end{equation}
\end{ass}

\begin{proposition}\label{prop1}
Let Assumption \ref{ass1} hold. Given a point $z \in \R^n$ and the related sets ${\cal A}(z)$ and ${\cal N}(z)$, 
let $y$ be the point defined as
\begin{eqnarray*}
&&y_{{\cal A}(z)} = 0, \qquad \quad y_{{\cal N}(z)} = z_{{\cal N}(z)}.
\end{eqnarray*}
 Then,
$$
 f(y)-f(z)\leq - \frac{1}{2\eps}\|y-z\|^2.
$$
\end{proposition}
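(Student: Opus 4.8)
The plan is to exploit the exact second-order expansion of the quadratic part $q$ together with the defining inequalities of the active set estimate, coordinate by coordinate. First I set $d = y - z$ and record that, by construction of $y$, this difference is supported only on ${\cal A}(z)$: we have $d_i = -z_i$ for $i\in{\cal A}(z)$ and $d_i = 0$ for $i\in{\cal N}(z)$, so $\|y-z\|^2 = \sum_{i\in{\cal A}(z)} z_i^2$. Since $q$ is quadratic, its Taylor expansion is exact, giving $q(y)-q(z) = g(z)^\top d + \tfrac12 d^\top H d$. For the nonsmooth term the coordinates in ${\cal N}(z)$ cancel and those in ${\cal A}(z)$ are zeroed, so $\tau(\|y\|_1-\|z\|_1) = -\tau\sum_{i\in{\cal A}(z)}|z_i|$. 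Adding the two contributions and using that $d$ is supported on ${\cal A}(z)$, I obtain the identity
$$ f(y) - f(z) = -\sum_{i\in{\cal A}(z)}\big( g_i(z)\,z_i + \tau|z_i| \big) + \tfrac12\, d^\top H d. $$

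The crux of the argument, and the step I expect to be the main obstacle, is a sharp per-coordinate lower bound: for every $i\in{\cal A}(z)$,
$$ g_i(z)\,z_i + \tau|z_i| \ \ge\ \frac{z_i^2}{\eps}. $$
I prove this by splitting on the sign of $z_i$ and invoking the inequalities characterizing ${\cal A}(z) = I\setminus{\cal N}(z)$, namely $\max(0,z_i)\le \eps(\tau + g_i(z))$ and $\max(0,-z_i)\le \eps(\tau - g_i(z))$. If $z_i>0$, the first reads $z_i\le\eps(\tau+g_i(z))$, so $\tau+g_i(z)\ge z_i/\eps$ and hence $g_i(z)z_i+\tau|z_i| = z_i(\tau+g_i(z))\ge z_i^2/\eps$. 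If $z_i<0$, the second reads $-z_i\le\eps(\tau-g_i(z))$, so $\tau-g_i(z)\ge |z_i|/\eps$ and $g_i(z)z_i+\tau|z_i| = |z_i|(\tau-g_i(z))\ge z_i^2/\eps$. The case $z_i=0$ is trivial. Summing over $i\in{\cal A}(z)$ yields $\sum_{i\in{\cal A}(z)}(g_i(z)z_i+\tau|z_i|)\ge \tfrac1\eps\|y-z\|^2$.

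Finally, I control the quadratic remainder by $\tfrac12 d^\top H d \le \tfrac12\lambda_{max}(H)\|d\|^2$ and invoke Assumption~\ref{ass1}, $\eps<1/\lambda_{max}(A^\top A)$, which gives $\tfrac12\lambda_{max}(H)\le \tfrac1{2\eps}$. Substituting both bounds into the identity above produces
$$ f(y)-f(z)\ \le\ -\frac1\eps\|y-z\|^2 + \frac1{2\eps}\|y-z\|^2 \ =\ -\frac1{2\eps}\|y-z\|^2, $$
which is the claim. The only delicate point is the sign-based case analysis for the per-coordinate bound; everything else is a direct computation once the support structure of $y-z$ and the exactness of the quadratic expansion are recorded.
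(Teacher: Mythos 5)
Your proof is correct and follows essentially the same route as the paper's: an exact quadratic expansion of $f$ along the direction supported on ${\cal A}(z)$, the bound $\tfrac12 d^\top H d \le \tfrac{1}{2\eps}\|d\|^2$ from Assumption~\ref{ass1}, and a sign-based case analysis on $z_i$ using the defining inequalities of ${\cal A}(z)$. Your per-coordinate inequality $g_i(z)z_i+\tau|z_i|\ge z_i^2/\eps$ is exactly the paper's condition \eqref{eqlemmai} rewritten with $y_i=0$, so the two arguments coincide up to algebraic rearrangement.
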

\begin{proof}
see Appendix \ref{appmainresact}.\hfill
\end{proof}
\par\medskip

\subsection{Comparison with other active set strategies}\label{subsas1}
Our active set estimate is somehow related to those proposed respectively by Byrd et al. in \cite{nocedal} 
and by Yuan et al. in \cite{yuan}. It is also connected in some way to the IST Algorithm (ISTA), see e.g. ~\cite{FISTA,Daubechies}.
Indeed, an ISTA step can be seen as a simple way to set to zero the variables in the context of $\ell_1$-regularized least-squares problems.

Here, we would like to point out the
similarities and the differences between those strategies and the one we propose in the present paper.

First of all, we notice that, at a generic iteration $k$ of a given algorithm, if $x^k$ is the related iterate and $i \in I$  is an index estimated 
active by our estimate, that is,
$$i \in {\cal A}(x^k)  = \{i: \max(0,x^k_i)\le \epsilon\,(\tau+g_i(x^k)) \} \cap \{i: \max(0,-x^k_i)\le\epsilon\,(\tau-g_i(x^k)) \},$$
this is equivalent to write 
\begin{equation}\label{dualcondref}
x^k_i\in [\eps(g_i(x^k)-\tau), \eps(g_i(x^k)+\tau) ] \quad\mbox{and} \quad -\tau\leq g_i(x^k)\leq\tau, 
\end{equation}
which means that $x^k_i$ is sufficiently small and satisfies the optimality condition associated with a zero component (see \eqref{optcondorpr}).
As we will see, the estimate, due to the way it is defined, tends to be more conservative than other active set  strategies (i.e. it might set to zero 
slightly smaller sets of variables). On the other hand, the global property analyzed in the previous section (i.e. decrease of the objective function when setting to zero the active  
variables) seems to indicate that the estimate truly contains indices related to variables that will be active in the optimal solution.
As we will see later on, this important property does not hold when considering the other active set strategies analyzed here.

In the block active set algorithm for quadratic $\ell_1$-regularized problems proposed in \cite{nocedal}, the active set estimate,
at a generic iteration $k$, can be rewritten in the following way:
\begin{equation}\label{stima:byrd}\nonumber
{\cal A}_{Byrd}^k = \{i\,:\, x_i^k = 0;\, g_i(x^k)\in (-\tau,\tau)\} \cup \{i\,:\, x_i^k < 0;\,  g_i(x^k) = -\tau \}
\cup \{i\,:\, x_i^k > 0;\,  g_i(x^k) = \tau \}.
\end{equation}
Let $x^k\in \R^n$ and $i\in\{1,\ldots, n\}$ be an index estimated active by our estimate, from \eqref{dualcondref}, we get $ g_i(x^k)\in[-\tau,\tau]$.

Then, in the case $x_i^k=0$,  $i\in {\cal A}_{Byrd}^k$ implies $i\in {\cal A}(x^k)$. 
In fact, let $i\in {\cal A}_{Byrd}^k$. If $x_i^k=0$ we have $g_i(x^k)\in (-\tau,\tau)$ so that $i\in {\cal A}(x^k)$.
It is easy to see that the other way around is not true.

Other differences between the two estimates come out when considering indices $i$ such that $x_i^k \ne 0$.
Let $i\in {\cal A}_{Byrd}^k$ and, in particular, $i\in \{i\,:\, x_i^k < 0;\,  g_i(x^k) = -\tau \}$.
If $|x_i^k| > \epsilon\, 2\tau$, then we get   
$$max(0, -x_i^k) = - x_i^k > \epsilon\, 2\tau = \epsilon\,(\tau - g_i(x^k)),$$
so that $i\not\in {\cal A}(x^k)$.
Using the same reasoning we can see that, in the case $i\in {\cal A}_{Byrd}^k$ and, in particular, $i\in \{i\,:\, x_i^k > 0;\,  g_i(x^k) = \tau \}$,
it can happen
$$max(0, x_i^k) = x_i^k > \epsilon\, 2\tau = \epsilon\,(\tau + g_i(x^k)),$$
so that  $i\not\in {\cal A}(x^k)$.\\

In \cite{yuan}, the active set estimate is defined as follows
\begin{equation}\label{stima:yuan}
{\cal A}_{Yuan}^k = \big\{i\,:\, x_i^k = 0;\, g_i(x^k)\in (-\tau+M^{k-1},\tau-M^{k-1})\big\},
\end{equation}
where $M^{k-1}$ is a positive scalar that measures the violation of the optimality conditions. It is easy to see that our active
set contains the one proposed in \cite{yuan}. Furthermore, we have that variables contained in our estimate are not necessarily
contained in the estimate \eqref{stima:yuan}. In particular, a big difference between our estimate  and the one proposed in
\cite{yuan} is that we can also include  variables that are non-zero at the current iterate.
\par\smallskip
 As a final comparison, we would like to point out the
differences between  the ISTA strategy and our estimate. Consider the generic iteration of ISTA with the same $\epsilon$ used in our
active set strategy:
\begin{equation}\label{one}
x^{k+1}= \arg\min_{x} \Bigg\{ q(x^k) + g(x^k)^\top (x-x^k) + \epsilon \|x - x^k\|^2  +\tau \|x\|_1 \Bigg\}.
\end{equation}
From the optimality conditions of the inner problem in \eqref{one}, we have that the zero variables at $x^{k+1}$ belong
to the following set:
\begin{equation}\label{Aista}
{\cal A}^k_{ISTA} = \{ i\, :\, \epsilon (-\tau + g_i (x^k))\le x_i^k \le \epsilon (\tau + g_i (x^k) ) \}.
\end{equation}
We can easily see that  ${\cal A}(x^k)\subseteq {\cal A}^k_{ISTA}$. The opposite is not always true, apart from the variables $x_i^k=0$. As a
matter of fact, let us consider $x_i^k>0$ and $i\in {\cal A}^k_{ISTA}$. Then, we have that
$$
\begin{array}{l l l}
 x_i^k \le \epsilon (\tau + g_i(x^k)) & \Rightarrow & i\in \{i: \ \max(0,x_i^k)\leq \eps (\tau+g_i(x^k))\}\\\\
 x_i^k \ge \epsilon (-\tau + g_i(x^k))& \Rightarrow & -x_i^k \le \epsilon (\tau - g_i(x^k))
\end{array}
$$
In order to have $i\in {\cal A}(x^k)$ it should be
$$\epsilon (\tau - g_i) \ge \max\{0, -x_i^k\} = 0$$
that is a tighter requirement with respect to the one within ${\cal A}^k_{ISTA}$. A similar reasoning applies also to variables $x_i^k<0$
with $i\in {\cal A}^k_{ISTA}$.
We would  also like to notice that  the ISTA step might generate unnecessary projections of variables to zero, thus being not
always effective as a tool for identifying the active set.

In this final remark, we show that, when using the active set strategies analyzed above, a sufficient decrease of the objective function cannot be guaranteed by setting to zero the variables in 
the active set (i.e. Proposition~\ref{prop1} does not hold). This fact makes hard, in some cases, to include those active set strategies into a globally convergent algorithmic framework.
\begin{remark}\label{remark:ASproperty}
Proposition~\ref{prop1} does not hold for the active set strategies described above. This can be easily seen in the following case.

Let us assume that, at some iteration $k$, it exists only one index $\hat \imath\in {\cal A}^k_{Byrd}$, with $x^k_{\hat \imath}>0$, 
$H_{\hat \imath \hat \imath}>0$ and 
$g_{\hat \imath}(x^k) = \tau$. Let $z=x^k$ and $y$ be the point defined as $y_i = x_i^k$ for all $i\neq \hat\imath$, and $y_{\hat\imath}=0$.
Then, 
$$f(y) = f(x^k) + (g_{\hat\imath}(x^k) - \tau) (y_{\hat\imath} - x_{\hat\imath}^k) + \frac 1 2 (y_{\hat\imath} - x^k_{\hat\imath})^2 H_{\hat\imath\hat\imath}.$$
Since $H_{\hat\imath \hat\imath}> 0 $ and  $g_{\hat\imath}(x^k) = \tau$, we have 
$f(y) - f(x^k)>0$, so that by setting to zero the active variable we get an increase of the objective function value.

The same reasoning applies also to the ISTA step, assuming that at some iteration $k$, there exists only one index 
$\hat \imath$ such that
$$\epsilon(-\tau + g_{\hat \imath}(x^k))<x^k_{\hat \imath}< \epsilon(\tau + g_{\hat \imath}(x^k))$$
and $g_{\hat \imath}(x^k) = \tau$.

Finally, it is easy to notice that, at each iteration $k$,  the active set estimate ${\cal A}^k_{Yuan}$ defined in \cite{yuan} only keeps fixed to zero, at iteration $k$, 
some of the variables that are already zero in $x^k$, thus not changing the objective function value.
\end{remark}
\par\smallskip

\section{A Fast Active Set Block Coordinate Descent Algorithm}\label{algorithm}
In this section, we describe our Fast Active SeT Block Coordinate Descent Algorithm (\texttt{FAST-BCDA}) and analyze its theoretical properties.
The main idea behind the algorithm is that of exploiting as much as possible the good properties of our active set estimate, more specifically:
\begin{itemize}
 \item[-] the ability to identify, for $k$ sufficiently large,   the ``strong'' active variables (namely, those variables satisfying the strict complementarity, see
 Theorem~\ref{activeest});
 \item[-] the ability to obtain, at each iteration, a sufficient decrease of the objective function, by fixing to zero those variables belonging to the active set estimate
 (see Proposition~\ref{prop1} of the previous section).
\end{itemize}

As we have seen in the previous section, the estimate, due to the way it is defined, tends to be more conservative than other active set  strategies (i.e. it might set to zero a
slightly smaller set of variables at each iteration). Anyway, since for each block we exactly solve an $\ell_1$-regularized subproblem, we can eventually force to zero 
some other variables in the non-active set. Another important consequence of including the $\ell_1$-norm in the subproblems  
is that we do not need any sign identification strategy for the non-active variables.

At each iteration $k$, the algorithm defines two sets
${\cal N}^k={\cal N}(x^k)$, ${\cal A}^k={\cal A}(x^k)$ and executes two steps:
\begin{itemize}
 \item[1)]  it sets to zero all of the active variables;
 \item[2)]it  minimizes only over a subset of the non-active variables, i.e. those which violate the optimality conditions the most. 
\end{itemize}
More specifically, we consider the measure related to the violation of the optimality conditions reported in \eqref{tsengest}.
 We then sort in decreasing order the indices of non-active variables 
(i.e. the set of indices ${\cal N}^k$) with respect to this measure
 and define the subset $\bar {\cal N}^k_{ord}\subseteq {\cal N}^k$ containing the first $s$ sorted indices. \par\smallskip\noindent
 The set $\bar {\cal N}^k_{ord}$ is then partitioned into $q$ subsets $I_1,\ldots, I_q$ of cardinality
 $r$, such that $s=qr$.
 Then the algorithm performs $q$ subiterations.
 At the $j$-th subiteration  the algorithm considers the set $I_j \subseteq \bar {\cal N}^k_{ord}$ and  solves to
optimality the subproblem we get from
\eqref{l2l1}, by fixing all the variables but the ones whose indices belong to $I_j$.
Below we report the scheme of the proposed algorithm (see Algorithm \ref{fig:FAST-CDA}).
\begin{algorithm}                      
\caption{\small\tt Fast Active SeT Block Coordinate Descent Algorithm (FAST-BCDA)}          
\label{fig:FAST-CDA} 
\begin{algorithmic} 
\par\vspace*{0.1cm}
\item $1$\hspace*{0.5truecm} {\bf Choose} $x^0\in \R^n$, {\bf Set} $k=0$.
\item$2$\hspace*{0.5truecm} {\bf For } $k=0, 1\ldots$
\item$3$\hspace*{1.0truecm} {\bf Compute} ${\cal A}^k$,  ${\cal N}^k$, $\bar {\cal N}^k_{ord}$ ;
\item$4$\hspace*{1.0truecm} {\bf Set} $y^{0,k}_{{\cal A}^k} = 0$ and $y^{0,k}_{{\cal N}^k} = x^k_{{\cal N}^k}$;
\item$5$\hspace*{1.0truecm} {\bf For } $j=1,\dots,q$
\item$6$\hspace*{1.5truecm} {\bf Compute } $y_{I_j}^{j,k}$, with $I_j \subseteq \bar {\cal N}^k_{ord}$, solution of problem
$$
\min_{w \in R^r} g_{I_j}(y^{j-1,k})^\top(w-y_{I_j}^{j-1,k})+\frac{1}{2} (w-y_{I_j}^{j-1,k})^\top H_{I_j I_j}(w-y_{I_j}^{j-1,k})+\tau \|w\|_1
$$
\item$7$\hspace*{1.5truecm} {\bf Set} $y_i^{j,k} = y_i^{j-1,k}$ if $i\not\in I_j$;
\item$8$\hspace*{0.8truecm} {\bf End For }
\item$9$\hspace*{0.8truecm} {\bf Set }  $x^{k+1}=y^{q,k}$;
\item $10$\hspace*{0.4truecm} {\bf End For }
\par\vspace*{0.1cm}
\end{algorithmic}
\end{algorithm}
\par\medskip
The convergence of {\tt FAST-BCDA} is based on two important results.
The first one is Proposition~\ref{prop1}, which guarantees a sufficient decrease of 
the objective function by setting to zero the variables in the active set.
The second one is reported in the proposition below. It  shows that, despite the presence of the  nonsmooth term,  by exactly
 minimizing Problem~\eqref{l2l1} with respect to  a subset $J$ of the variables (keeping all the other variables fixed),
 it is possible to get a sufficient decrease of the objective function 
 in case $\lambda_{min}(H_{JJ})>0$. 
\begin{proposition}\label{lemma2}
Given a point $z\in \R^n$ and a set  $J\subseteq I$, let
$w^*\in \R^{|J|}$ be the solution of Problem~\eqref{l2l1}, where
all variables but the ones whose indices belong to $J$ are fixed
to $z_{I\setminus J}$. Let $y\in \R^n$ be defined as
$$y_J= w^*, \quad \quad y_{I\setminus J} = z_{I\setminus J}.$$

Then we have
\begin{equation}\label{eqlemma2ver2}
  f(y)-f(z)\leq - \frac{1}{2} \lambda_{min}(H_{JJ})\|y-z\|^2.
\end{equation}
\end{proposition}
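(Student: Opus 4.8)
The plan is to reduce everything to a computation on the subspace indexed by $J$, exploiting that $q$ is an exact quadratic and that $y$ and $z$ differ only in their $J$-components. First I would set $d := y - z$, noting that $d_{I\setminus J} = 0$ and $d_J = w^* - z_{J}$, so that $\|d\| = \|y-z\|$. Since $q(x) = \frac{1}{2}\|Ax-b\|^2$ is quadratic with gradient $g$ and Hessian $H$, its second-order Taylor expansion around $z$ is exact, giving $q(y) - q(z) = g_J(z)^\top d_J + \frac{1}{2} d_J^\top H_{JJ} d_J$ (the cross terms and the off-$J$ blocks drop out because $d$ is supported on $J$). Because $y$ and $z$ agree outside $J$, the nonsmooth term contributes only $\tau(\|w^*\|_1 - \|z_J\|_1)$, so
$$f(y) - f(z) = g_J(z)^\top d_J + \tfrac{1}{2}\, d_J^\top H_{JJ} d_J + \tau\big(\|w^*\|_1 - \|z_J\|_1\big).$$

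Next I would bring in the optimality of $w^*$. Since $w^*$ minimizes the restricted problem obtained from \eqref{l2l1} by fixing the variables outside $J$ at $z_{I\setminus J}$ (its smooth part, as a function of $w$, has Hessian $H_{JJ}$ and gradient $g_J$ evaluated at the corresponding full point), the first-order subdifferential optimality condition yields a subgradient $s \in \partial\|w^*\|_1$ with $g_J(y) + \tau s = 0$. Using that $g$ is affine, $g_J(y) = g_J(z) + H_{JJ} d_J$, hence $\tau s = -g_J(z) - H_{JJ} d_J$. Convexity of the $\ell_1$ norm then gives the subgradient inequality $\|z_J\|_1 \geq \|w^*\|_1 + s^\top(z_J - w^*)$, that is, $\tau(\|w^*\|_1 - \|z_J\|_1) \leq \tau s^\top d_J$.

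Substituting the expression for $\tau s$ into this bound and plugging back into the identity for $f(y)-f(z)$, the linear term $g_J(z)^\top d_J$ cancels and the quadratic term collapses to $-\frac{1}{2}\, d_J^\top H_{JJ} d_J$. I would finish by invoking $d_J^\top H_{JJ} d_J \geq \lambda_{min}(H_{JJ})\,\|d_J\|^2 = \lambda_{min}(H_{JJ})\,\|y-z\|^2$, which delivers \eqref{eqlemma2ver2}. Equivalently, one may observe that the restricted objective is $\lambda_{min}(H_{JJ})$-strongly convex and apply the standard strong-convexity lower bound at its minimizer, evaluated at $z_J$; this packages the same inequality more abstractly. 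The only real subtlety — the hard part — is handling the nonsmooth $\ell_1$ term correctly: since $f$ is not differentiable, I cannot use a plain stationarity condition and must instead extract a valid subgradient $s$ at $w^*$ and control $\|w^*\|_1 - \|z_J\|_1$ through the convexity inequality rather than by differentiation. Once that is in place, the remaining algebra is routine.
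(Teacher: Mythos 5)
Your proof is correct, and it reaches the conclusion by a genuinely more direct route than the paper. The paper's argument avoids subdifferential calculus altogether: it partitions $J$ according to the signs of $w^*$ and of $z$, builds a surrogate objective $\tilde f$ in which the $\ell_1$ term is linearized with those fixed signs, recasts $w^*$ as the minimizer of a sign-constrained smooth quadratic program, and extracts from the KKT conditions the inequality $\bigl(g_J(z)+H_{JJ}(w^*-z_J)+\tau s\bigr)^\top(w^*-z_J)\le 0$; it then needs two bridging relations, $f(y)=\tilde f(w^*)$ and $\tilde f(z_J)\le f(z)$, to transfer the decrease of $\tilde f$ back to $f$. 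You instead keep the nonsmooth objective throughout, use the exactness of the second-order expansion of $q$ along a direction supported on $J$, and invoke the optimality of $w^*$ in the form $0\in g_J(y)+\tau\,\partial\|w^*\|_1$ together with the subgradient inequality for $\|\cdot\|_1$; the linear terms cancel and the same quantity $-\tfrac12 d_J^\top H_{JJ}d_J$ appears, after which the eigenvalue bound is immediate. The two arguments are morally identical --- the paper's sign vector $s$ is a particular element of $\partial\|w^*\|_1$, and its KKT inequality is exactly your subgradient bound --- but yours is shorter and dispenses with the case analysis over sign patterns, while the paper's stays within smooth KKT machinery, consistent with the box-constrained reformulation it uses elsewhere (e.g.\ in Section~\ref{estimate}). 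Your closing remark that the whole computation is the strong-convexity lower bound $\phi(z_J)\ge\phi(w^*)+\tfrac{\mu}{2}\|z_J-w^*\|^2$ at the minimizer of the restricted objective, with $\mu=\lambda_{min}(H_{JJ})$, is also valid (and holds even when $\lambda_{min}(H_{JJ})=0$). No gaps.
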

\par\smallskip
\begin{proof}
See Appendix \ref{appcon}.\hfill 
\end{proof}
\par\medskip

Now, we introduce an assumption that will enable us to prove global convergence of our algorithm. 
\begin{ass}\label{ass2}
The matrix $A\in \R^{m\times n}$ satisfies the following condition
\begin{eqnarray}\label{assunzione}
&&\min_{J} \lambda_{min}( (A^\top A)_{JJ})\ge \sigma>0,
\end{eqnarray}
where $J$ is any subset of $\{ 1,\ldots, n\}$ such that $|J|=r$, with $r$ cardinality of the blocks used in {\tt FAST-BCDA}.
\end{ass}
\par\medskip
\begin{remark} We notice that even though there are some similarities between 
Condition \eqref{assunzione} and the well-known Restricted Isometry Property (RIP) condition
with fixed order $r$  (see e.g. \cite{RIP} for further details), Condition \eqref{assunzione} is weaker than the RIP condition. 
\end{remark}
\par\medskip
Finally, we are ready to state the main result concerning the global convergence of {\tt FAST-BCDA}.
\begin{theorem}\label{teorema1}
Let Assumption \ref{ass1} and Assumption \ref{ass2} hold.  Let $\{x^k\}$ be the sequence produced by Algorithm  \texttt{FAST-BCDA}.

Then, either an integer $\bar k\geq 0$
 exists such that $ x ^{\bar k}$ is an optimal solution for Problem~\eqref{l2l1},
 or the sequence $\{x^k\}$ is infinite
and every limit point $x^\star$
 of the sequence is an optimal point for Problem~\eqref{l2l1}.
 \end{theorem}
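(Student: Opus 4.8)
The plan is to compress the two sufficient-decrease results into a single per-iteration estimate, deduce convergence of the objective-value sequence, and then turn the resulting vanishing of the displacements into the optimality conditions \eqref{optcondorpr} at any limit point. First I would follow the objective along one outer iteration. The passage $x^k\to y^{0,k}$ only zeroes the estimated-active variables, so Proposition~\ref{prop1} gives $f(y^{0,k})-f(x^k)\le-\frac{1}{2\epsilon}\|y^{0,k}-x^k\|^2$. Each inner step $y^{j-1,k}\to y^{j,k}$ solves \eqref{l2l1} exactly over the block $I_j$ of cardinality $r$, so Proposition~\ref{lemma2} with $J=I_j$, combined with Assumption~\ref{ass2} (which guarantees $\lambda_{min}(H_{I_jI_j})\ge\sigma>0$), yields $f(y^{j,k})-f(y^{j-1,k})\le-\frac{\sigma}{2}\|y^{j,k}-y^{j-1,k}\|^2$. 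Summing over the active-set step and $j=1,\dots,q$ produces
$$f(x^k)-f(x^{k+1})\ \ge\ \frac{1}{2\epsilon}\|y^{0,k}-x^k\|^2+\frac{\sigma}{2}\sum_{j=1}^{q}\|y^{j,k}-y^{j-1,k}\|^2\ \ge\ 0.$$
Thus $\{f(x^k)\}$ is nonincreasing; since $f\ge 0$ it is bounded below and converges. If some $x^{\bar k}$ already satisfies \eqref{optcondorpr} we are in the first alternative (one checks that the estimate then leaves $x^{\bar k}$ unchanged, so the iterate remains optimal); otherwise the sequence is infinite and it remains to prove optimality of its limit points.

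Because $\tau>0$ the function $f$ is coercive, so the level set $\{x:f(x)\le f(x^0)\}$ is bounded and $\{x^k\}$ admits limit points. Fix a limit point $x^\star$ and a subsequence $x^k\to x^\star$, $k\in K$. Since the function values converge, $f(x^k)-f(x^{k+1})\to 0$, and the displayed inequality forces $\|y^{0,k}-x^k\|\to 0$ and $\|y^{j,k}-y^{j-1,k}\|\to 0$ for every $j$. As there are only finitely many index partitions, I would refine $K$ to a subsequence $K'$ on which the sets $\mathcal{A}^k\equiv\mathcal{A}$, $\mathcal{N}^k\equiv\mathcal{N}$, $\bar{\mathcal{N}}^k_{ord}\equiv\bar{\mathcal{N}}_{ord}$ and the blocks $I_1,\dots,I_q$ are all constant. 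On $K'$ the vanishing displacements give $x^k_{\mathcal{A}}\to 0$, hence $x^\star_{\mathcal{A}}=0$ and $y^{0,k}\to x^\star$; adding the telescoping differences, every intermediate iterate $y^{0,k},\dots,y^{q,k}$ converges to the same $x^\star$ along $K'$.

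It then remains to verify \eqref{optcondorpr} index by index. For $i\in\mathcal{A}$ we have $x^\star_i=0$, while $i\in\mathcal{A}(x^k)$ gives $g_i(x^k)\in[-\tau,\tau]$ by \eqref{dualcondref}; continuity of $g$ yields $-\tau\le g_i(x^\star)\le\tau$, exactly the condition for a zero component. For $i\in I_j\subseteq\bar{\mathcal{N}}_{ord}$, the exact block minimization means $y^{j,k}_{I_j}$ satisfies the optimality conditions of the subproblem; since $g_i(y^{j,k})=g_i(y^{j-1,k})+[H_{I_jI_j}(y^{j,k}_{I_j}-y^{j-1,k}_{I_j})]_i$, these translate into \eqref{optcondorpr} for $i\in I_j$ evaluated at $y^{j,k}$, and because $y^{j,k}\to x^\star$ they pass to the limit in each of the three sign cases. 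Hence \eqref{optcondorpr} holds at $x^\star$ for every $i\in\bar{\mathcal{N}}_{ord}$, equivalently $\Phi_i(x^\star)=0$. Finally, for $i\in\mathcal{N}\setminus\bar{\mathcal{N}}_{ord}$ I would invoke the ordering rule: since $\bar{\mathcal{N}}_{ord}$ collects the $s$ indices with largest violation measure, $|\Phi_i(x^k)|\le\min_{l\in\bar{\mathcal{N}}_{ord}}|\Phi_l(x^k)|$, and the right-hand side tends to $0$ along $K'$ because $\Phi_l(x^\star)=0$ for every selected $l$. By continuity $\Phi_i(x^\star)=0$, so \eqref{optcondorpr} holds for these indices as well, and collecting the three cases shows $x^\star$ is optimal.

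The delicate point is the treatment of the non-active variables that are never updated, namely $\mathcal{N}\setminus\bar{\mathcal{N}}_{ord}$: for these the algorithm produces no displacement from which to read off information, so optimality at the limit cannot be extracted from a decrease estimate. The resolution is precisely the Gauss-Southwell-type ordering behind \eqref{tsengest}, which ensures that any index excluded from $\bar{\mathcal{N}}_{ord}$ has violation no larger than the selected ones, so that driving the selected violations to zero automatically controls the remainder. A secondary technicality is making the block-wise optimality conditions survive the limit even though later sub-iterations perturb the gradient entries of earlier blocks; this is settled by the fact that all intermediate iterates share the single limit $x^\star$, which itself rests on the summed sufficient-decrease inequality together with Assumption~\ref{ass2}.
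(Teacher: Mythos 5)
Your proof is correct and follows essentially the same route as the paper: sufficient decrease from Propositions \ref{prop1} and \ref{lemma2}, compactness of the level set via coercivity, a subsequence along which the index sets are frozen, and a case analysis on whether an index lies in $\mathcal{A}$, in $\bar{\mathcal{N}}_{ord}$, or in $\mathcal{N}\setminus\bar{\mathcal{N}}_{ord}$ (the last handled, as in the paper, through the Gauss--Southwell ordering of $|\Phi_i|$ and the exact block optimality of the selected indices). The only noteworthy difference is local: for indices in $\mathcal{A}$ you argue directly that $x^\star_i=0$ (from the vanishing displacement of the zeroing step) and $g_i(x^\star)\in[-\tau,\tau]$ (from \eqref{dualcondref} and continuity of $g$), which is a simpler and equally valid substitute for the paper's contradiction argument through the auxiliary points $\tilde y^{j,k}$ and its three subcases.
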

\par\smallskip
\begin{proof}
see Appendix \ref{appcon}.\hfill 
\end{proof}
\par\medskip

Now, we discuss Assumptions \ref{ass1} and \ref{ass2} that are needed to guarantee convergence of {\tt FAST-BCDA}.
\subsection{Comments on the assumptions}\label{comm_ass}
Assumption \ref{ass1} requires the evaluation of $\lambda_{max}(A^\top A)$, which is not always easily computable for large scale problems.
Hence, we describe an updating rule for the parameter
$\eps$, that enables to avoid any ``a priori'' assumption on $\eps$. 

In practice, at each iteration $k$  we need to find the smallest
$h\in \N$ such that the value $\eps=\theta^h \tilde\eps$ and the corresponding sets ${\cal A}^k$, ${\cal N}^k$
give a point 
$$y^{0,k}_{{\cal A}^k} = 0\quad \mbox{and} \quad y^{0,k}_{{\cal N}^k} = x^k_{{\cal
N}^k}$$  satisfying
\begin{equation}
\label{eps-alg}f(y^{0,k})\le f(x^k)-\gamma\|y^{0,k}-x^k\|^2,
\end{equation}
with $\gamma > 0$. Then, we can introduce a variation of {\tt FAST-BCDA}, namely {\tt FAST-BCDA-$\eps$},
that includes the updating rule for the parameter $\eps$ in its scheme, and prove its convergence.
\begin{theorem}
Let Assumption \ref{ass2} hold.
Let $\{x^k\}$ be the sequence produced by Algorithm  \texttt{FAST-BCDA-$\eps$}.

Then, either an integer $\bar k\geq 0$
 exists such that $ x ^{\bar k}$ is an optimal solution for Problem~\eqref{l2l1},
 or the sequence $\{x^k\}$ is infinite
and every limit point $x^\star$
 of the sequence is an optimal point for Problem~\eqref{l2l1}.
 \end{theorem}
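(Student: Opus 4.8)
The plan is to reduce this result to the already-established Theorem~\ref{teorema1} by showing that the only role Assumption~\ref{ass1} plays in that proof --- namely, guaranteeing that fixing the estimated active variables to zero decreases $f$ by an amount proportional to $\|y^{0,k}-x^k\|^2$ with a constant uniform in $k$ --- is recovered here through the acceptance test \eqref{eps-alg}. Two preliminary facts must therefore be verified before the machinery of Theorem~\ref{teorema1} can be invoked: that the inner loop selecting $\epsilon=\theta^h\tilde\epsilon$ terminates after finitely many trials at every iteration, and that the accepted values $\epsilon_k$ stay confined to a compact subinterval of the positive reals, uniformly in $k$.

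For well-definedness I would invoke Proposition~\ref{prop1}: whenever $\epsilon\le 1/\lambda_{max}(A^\top A)$, Assumption~\ref{ass1} is in force and hence
\[
f(y^{0,k})-f(x^k)\le -\frac{1}{2\epsilon}\|y^{0,k}-x^k\|^2 .
\]
Thus, as soon as $\epsilon\le c:=\min\{1/\lambda_{max}(A^\top A),\,1/(2\gamma)\}$ one has $\tfrac{1}{2\epsilon}\ge\gamma$, so the test \eqref{eps-alg} is satisfied. Since $0<\theta<1$ gives $\theta^h\tilde\epsilon\to 0$ as $h\to\infty$, a finite $h_k$ always exists and the inner loop stops. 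Because $h_k$ is the \emph{smallest} exponent for which \eqref{eps-alg} holds and every $\epsilon\le c$ is admissible, $h_k$ cannot exceed the first exponent $h^\star$ with $\theta^{h^\star}\tilde\epsilon\le c$; this yields the uniform two-sided bound $\underline\epsilon:=\min\{\tilde\epsilon,\theta c\}\le\epsilon_k\le\tilde\epsilon$ for every $k$, with $\epsilon_k$ bounded away from zero.

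With these facts in hand the decrease estimates become uniform. The active-set step satisfies $f(y^{0,k})\le f(x^k)-\gamma\|y^{0,k}-x^k\|^2$ by construction, while each block subiteration, solved to optimality over a set $I_j$ of cardinality $r$, obeys Proposition~\ref{lemma2} with $\lambda_{min}(H_{I_jI_j})=\lambda_{min}((A^\top A)_{I_jI_j})\ge\sigma$ by Assumption~\ref{ass2}, so that $f(y^{j,k})\le f(y^{j-1,k})-\tfrac{\sigma}{2}\|y^{j,k}-y^{j-1,k}\|^2$. Telescoping over $j$ and summing over $k$, and using that $f\ge 0$ is bounded below and monotonically nonincreasing, one concludes that $f(x^k)$ converges and that all the squared increments are summable; in particular $\|y^{0,k}-x^k\|\to 0$ and $\|y^{j,k}-y^{j-1,k}\|\to 0$ for every $j$. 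From here the argument of Theorem~\ref{teorema1} applies: along any convergent subsequence $x^k\to x^\star$ one may, thanks to the compactness bound $\epsilon_k\in[\underline\epsilon,\tilde\epsilon]$, pass to a further subsequence with $\epsilon_k\to\epsilon^\star>0$, so that the estimate enjoys the local properties of Theorem~\ref{activeest} with limiting parameter $\epsilon^\star$; combining the vanishing of the steps, the continuity of $g$ and $H$, and the exact optimality of each block subproblem then forces the optimality conditions \eqref{optcondorpr} to hold at $x^\star$. The finite-termination alternative arises exactly as before, when some iterate already satisfies \eqref{optcondorpr}.

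I expect the main obstacle to be the handling of the iteration-dependent parameter $\epsilon_k$ in the limit-point analysis: unlike in Theorem~\ref{teorema1}, where $\epsilon$ is fixed once and for all, one must here guarantee that the sets ${\cal A}(x^k)$ and ${\cal N}(x^k)$ do not degenerate as $k\to\infty$. The uniform lower bound $\epsilon_k\ge\underline\epsilon>0$ established above is precisely what prevents this degeneration and allows the structural properties of the estimate to pass to the limit; deriving and then exploiting this bound is the only genuinely new element with respect to the proof of Theorem~\ref{teorema1}.
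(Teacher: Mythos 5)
Your proposal follows exactly the route the paper takes: its entire proof of this theorem is the single remark that one repeats the arguments of Theorem~\ref{teorema1} with the backtracking test \eqref{eps-alg} substituted for \eqref{app_prop1}. You add two pieces of detail the paper leaves implicit --- finite termination of the inner loop on $h$ (via Proposition~\ref{prop1}, valid once $\theta^h\tilde\eps\le\min\{1/\lambda_{max}(A^\top A),\,1/(2\gamma)\}$) and the uniform confinement of the accepted $\eps_k$ to $[\underline\eps,\tilde\eps]$ --- and these are genuinely worth spelling out, since without a positive lower bound on $\eps_k$ the active-set estimates could degenerate along the sequence.

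One caveat: your opening claim that the \emph{only} role of Assumption~\ref{ass1} in the proof of Theorem~\ref{teorema1} is the sufficient decrease \eqref{app_prop1} is not accurate as the paper wrote that proof. Assumption~\ref{ass1} is invoked a second time, in subcase~i) (and its mirror ii)) of the contradiction argument, where the bound $\eps\le 1/(H_{\hat\imath\hat\imath}+\rho)$ is used to derive \eqref{ge0}; the accepted $\eps_k$ in \texttt{FAST-BCDA-$\eps$} need only pass the test \eqref{eps-alg} and may exceed $1/\lambda_{max}(A^\top A)$, so that step does not carry over verbatim. The gap is repairable without any cap on $\tilde\eps$: for $\hat\imath\in\hat{\cal A}$ the membership conditions already give $-\tau\le g_{\hat\imath}(x^k)\le\tau$ for any $\eps_k>0$ (see \eqref{dualcondref}), and since $|x^k_{\hat\imath}|\le\|y^{0,k}-x^k\|\to 0$ one gets $x^\star_{\hat\imath}=0$ and hence $\Phi_{\hat\imath}(x^\star)=0$ directly by continuity, bypassing the estimate on $g_{\hat\imath}(\tilde y^{\hat\imath,k})$. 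Either make that replacement explicit or cap $\tilde\eps$ at $1/\lambda_{max}(A^\top A)$; as written, ``repeat the same arguments'' would stall at that step.
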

\par\smallskip
\begin{proof} The proof follows by repeating the same arguments of the proof of Theorem \ref{teorema1} by replacing the relation
(\ref{app_prop1}) with (\ref{eps-alg}). \hfill
\end{proof}
\par\medskip
Assumption \ref{ass2}, which we need to satisfy in order to guarantee
convergence of both {\tt FAST-BCDA} and {\tt FAST-BCDA-$\eps$}, is often met in practice if we
consider blocks of 1 or 2 variables (i.e. $r$ equal to 1 or 2). Indeed, when solving blocks
of 1 variable, we need to guarantee that any column $A_j$ of
matrix $A$ is such that
$$\|A_j\|^2\ge \sigma>0.$$
This is often the case when dealing with overcomplete dictionaries for signal/image reconstruction (as the columns of matrix $A$ are usually normalized, see e.g. \cite{over}).
When using 2-dimensional blocks, we want no parallel columns in the matrix $A$. This is a quite common requirement in the context
of overcomplete dictionaries (as it corresponds to ask that mutual coherence is lower than 1, see e.g. \cite{over}).
Furthermore, the solution of 1-dimensional block subproblems can be determined in closed form by means of the well-known scalar soft-threshold
function (see e.g. \cite{FISTA,SPARSA}). Similarly, we can express in closed form  the solution of 2-dimensional block subproblems.

Summarizing, thanks to the possibility to use an updating rule for $\eps$, and due to the fact that we only use blocks of dimensions 1 or 2 in our algorithm, we have that Assumptions \ref{ass1}
and \ref{ass2} are quite reasonable in practice.
\subsection{Convergence rate analysis}\label{convrate}
Here, we report a result related to the convergence rate of \texttt{FAST-BCDA} 
with 1-dimensional blocks (namely \texttt{FAST-1CDA}). In particular, we show that  it converges at a linear rate. In order to prove the result, we make an assumption that is common 
when analyzing  the convergence rate of both algorithms  for $\ell_1$-regularized problems (see e.g. \cite{ HaleYinZhang}) and
algorithms for general problems (see e.g. \cite{ortegareinbolt}):
 
\begin{ass}\label{ass3}
 Let $\{x^k\}$ be the sequence generated by \texttt{FAST-1CDA}. We have that  
  \begin{equation}\label{convtoopt}
   \lim_{k \to \infty} x^k=x^\star,
  \end{equation}
  where $x^\star$ is an optimal point of problem \eqref{l2l1}. 
\end{ass}
\par\medskip
Now, we state the theoretical result related to the linear convergence.
\begin{theorem}\label{mainconvres}
Let Assumptions \ref{ass1}, \ref{ass2} and \ref{ass3} hold. Let $\{x^k\}$ be the sequence generated by \texttt{FAST-1CDA}. 

Then $\{f(x^k)\}$ converges at least Q-linearly to $f^\star$, where $f^\star=f(x^\star)$ . Furthermore, $\{x^k\}$ converges at least 
 R-linearly to $x^\star$.
\end{theorem}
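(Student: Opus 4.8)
The plan is to establish three ingredients and then combine them in the standard way. First I would prove a per-iteration sufficient decrease of the form $f(x^k)-f(x^{k+1})\ge \beta\|x^{k+1}-x^k\|^2$. Within iteration $k$ the iterate is modified first by the active-set reset $x^k\mapsto y^{0,k}$ (supported on ${\cal A}^k$) and then by the $q$ exact one-dimensional minimizations $y^{j-1,k}\mapsto y^{j,k}$ (each supported on $I_j\subseteq\bar{\cal N}^k_{ord}\subseteq{\cal N}^k$). Since ${\cal A}^k$ and the blocks $I_1,\dots,I_q$ are pairwise disjoint, the successive increments have disjoint supports, whence $\|x^{k+1}-x^k\|^2=\|y^{0,k}-x^k\|^2+\sum_{j=1}^q\|y^{j,k}-y^{j-1,k}\|^2$. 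Applying Proposition~\ref{prop1} to the reset step and Proposition~\ref{lemma2} to each block (with $\lambda_{min}(H_{I_jI_j})\ge\sigma$ by Assumption~\ref{ass2}, using $r=1$) and summing the telescoping inequalities yields the claim with $\beta=\min\{1/(2\eps),\sigma/2\}$.

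The core of the argument is a matching upper estimate (a cost-to-go bound) of the form $f(x^{k+1})-f^\star\le M\|x^{k+1}-x^k\|^2$ valid for $k$ large. To obtain it I would rely on the local error bound available for $\ell_1$-regularized least squares: since the objective is the sum of a convex quadratic and a polyhedral term, there exist $\kappa>0$ and a neighborhood of $x^\star$ on which $\|x-x^\star\|\le\kappa\|R(x)\|$, where $R(x)=x-\mathrm{prox}_{\eps\tau\|\cdot\|_1}(x-\eps\,g(x))$ is the proximal residual; Assumption~\ref{ass3} guarantees that $x^k$ eventually lies in this neighborhood. I would then show that the residual is controlled by the algorithm's total displacement, $\|R(x^k)\|\le C\|x^{k+1}-x^k\|$, exploiting the per-block optimality conditions $0\in g_{I_j}(y^{j,k})+\tau\,\partial|y^{j,k}_{I_j}|$, the affinity of $g$ (so that gradients change across a sweep only by $H$ times the displacement), and the fact that the indices in ${\cal A}^k$ already satisfy the approximate zero-component condition \eqref{dualcondref}. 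Combining the error bound with a second-order expansion of $f$ about $x^\star$ (Lipschitz smoothness of $q$ together with convexity of the $\ell_1$ term) then gives $f(x^{k+1})-f^\star\le M\|x^{k+1}-x^k\|^2$.

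With both inequalities in hand, Q-linear convergence of the objective is immediate: from $\beta\|x^{k+1}-x^k\|^2\le f(x^k)-f(x^{k+1})$ and $f(x^{k+1})-f^\star\le M\|x^{k+1}-x^k\|^2$ one gets $f(x^{k+1})-f^\star\le \frac{M}{M+\beta}\,(f(x^k)-f^\star)$, so $f(x^k)-f^\star\le C_0\rho^k$ with $\rho=M/(M+\beta)\in(0,1)$. For the R-linear convergence of the iterates I would invoke the sufficient decrease once more: $\|x^{k+1}-x^k\|\le\sqrt{(f(x^k)-f(x^{k+1}))/\beta}\le\sqrt{C_0/\beta}\,\rho^{k/2}$; summing the tail of this geometric series and using Assumption~\ref{ass3} yields $\|x^k-x^\star\|\le\sum_{j\ge k}\|x^{j+1}-x^j\|\le C_1\rho^{k/2}$, which is R-linear convergence to $x^\star$.

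The main obstacle I anticipate is the cost-to-go step, and specifically the residual-control inequality $\|R(x^k)\|\le C\|x^{k+1}-x^k\|$. Two features of the method make this delicate: the nonsmooth $\ell_1$ term forces the optimality conditions to be inclusions rather than equalities, so one must argue through subdifferentials and the soft-threshold characterization; and the sweep updates only the most-violating block $\bar{\cal N}^k_{ord}$ rather than all coordinates, so controlling the residual on the untouched non-active indices requires the Gauss-Southwell-type selection governed by \eqref{tsengest} to ensure that the discarded violations are dominated by the displacement actually realized. Once these estimates are established, the remaining algebra is routine.
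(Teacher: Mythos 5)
Your proposal is correct in outline and follows the same Luo--Tseng framework as the paper: sufficient decrease proportional to $\|x^{k+1}-x^k\|^2$ (obtained exactly as you describe, from Propositions~\ref{prop1} and~\ref{lemma2} and the disjointness of the supports of the successive updates), a residual-control step driven by the Gauss--Southwell selection \eqref{tsengest}, a local error bound, a quadratic growth estimate of $f$ near $x^\star$, and finally summation of the displacements for R-linear convergence of the iterates. The one place where you genuinely diverge is the realization of the error bound. You invoke the error bound for the original nonsmooth objective via the proximal residual $R(x)=x-\mathrm{prox}_{\eps\tau\|\cdot\|_1}(x-\eps\,g(x))$; this is legitimate (the objective is a convex quadratic plus a polyhedral term), and it has the pleasant side effect that the coordinates in ${\cal A}^k$ contribute to $\|R(x^k)\|$ exactly their reset displacement, by \eqref{dualcondref}. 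The paper instead first proves finite identification: using Assumption~\ref{ass3} together with Theorem~\ref{activeest}, Theorem~\ref{signs} and Proposition~\ref{resultuseful}, it shows that for large $k$ the strongly active variables are fixed at zero and the signs on ${\cal E}(x^\star)$ are identified, so the iteration is eventually solving the sign-constrained \emph{smooth} quadratic problem \eqref{problemmod2}; it then applies Luo--Tseng's Theorem~2.1 to the projected-gradient residual of that reduced problem. Your route buys a shorter argument by outsourcing more to a stronger off-the-shelf error bound and by avoiding the identification machinery; the paper's route uses only the smooth-case error bound at the price of the auxiliary identification results. Two small points to tidy up if you write this out: the quantities $\Phi_i$ use the coordinate-dependent steplength $1/H_{ii}$ while your residual $R$ uses $\eps$, so the Gauss--Southwell domination transfers to $\|R(x^k)\|$ only up to constants depending on $\max_i H_{ii}$, $\min_i H_{ii}$ and $\eps$ (bounded under Assumption~\ref{ass2} with $r=1$); and the gradient in the block subproblems is evaluated at $y^{j-1,k}$ rather than at $x^k$, so the Lipschitz correction you mention must absorb $\|y^{0,k}-x^k\|$ as well, which it can since that term is part of the total displacement.
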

\par\smallskip
\begin{proof}
See Appendix \ref{appconrate}.\hfill
\end{proof}
\par\medskip


\section{Numerical Results}\label{numres}
In this section, we report the numerical experiments related to \texttt{FAST-BCDA}.
We implemented our method in MATLAB, and considered four different versions of it in the experiments:
\begin{itemize}
 \item \texttt{FAST-1CDA} and \texttt{FAST-2CDA}, basic versions of \texttt{FAST-BCDA} where blocks of dimension $1$ and $2$  are respectively considered;
 \item \texttt{FAST-1CDA-E} and \texttt{FAST-2CDA-E}, ``enhanced'' versions of  \texttt{FAST-BCDA}  
 where again blocks of dimension $1$ and $2$  are respectively considered (see subsection~\ref{sec:acc} for further details).
\end{itemize}
We first analyzed the performance of these four versions of our algorithm.
Then, we compared the best one with other algorithms for $\ell_1$-regularized least squares problems.
Namely, we compared \texttt{FAST-2CDA-E} with ISTA~\cite{FISTA,Daubechies}, FISTA~\cite{FISTA},  PSSgb~\cite{schmidt}, SpaRSA~\cite{SPARSA} and FPC$\_$AS~\cite{Wenetal1}.\\
All the tests were performed on an Intel Xeon(R) CPU E5-1650 v2  3.50 GHz  using MATLAB R2011b.

We considered two different testing problems of the form \eqref{l2l1}, commonly used for software benchmarking (see e.g.~\cite{Wenetal1,gondzio}).
In particular, we generated artificial signals of dimension $n= 2^{14}, 2^{15}, 2^{16}, 2^{17}$,
with a number of observations $m=n/4$ and we set the number of nonzeros  $T=round(\rho\,m)$, with $\rho=\{0.01, 0.03, 0.05, 0.07, 0.1\}$.
The two test problems (P1 and P2)  differ in the way matrix $A$ is generated:
\begin{itemize}
 \item[P1:] Considering $\bar A$ as the Gaussian matrix whose elements
are generated independently and identically distributed from the normal distribution
${\cal N} (0,1)$, the matrix $A$
was generated by scaling the columns of $\bar A$.

\item[P2:] Considering $\bar A$ as the matrix generated by using the MATLAB command
 $$A = \mbox{\texttt{sprand}(}m,n,\mbox{density)},$$
 with $\mbox{density}=0.5$,
 the matrix $A$
was generated by scaling the columns of $\bar A$.
\end{itemize}
We would like to notice that the Hessian matrices $A^\top A$ related to instances of problem P1 have most of the
mass on the diagonal. Then, those instances are in general easier to solve than the ones of problem P2.

Once the matrix $A$ was generated, the true signal $x^\star$ was built as a vector with $T$ randomly placed $\pm 1$ spikes, with zero in the other components.
Finally, for all problems, the vector of observations $b$ was chosen as $b= A\, x^\star + \eta$, where $\eta$ is a Gaussian
white noise vector, with variance $10^{-3}$. We set $\tau = 0.1 \|A^\top b\|_{\infty}$ as in \cite{salsa, SPARSA}.
We produced ten different random instances for each problem, for a total of $400$ instances.
The comparison of the overall computational effort is carried out by using the performance profiles proposed by Dolan
and Mor\'e in \cite{Dolanmore}, plotting graphs in a logarithmic scale.

For the value of $s$ (number of non-active variables to be used in $\bar {\cal N}_{ord}$) we set
$s=round(0.8\,T)$ for \texttt{FAST-1CDA} and $s=round(0.65\,T)$ for \texttt{FAST-2CDA} (these $s$ values are the ones that guarantee the
best performances among the ones we tried).
For what concerns the choice of the $\eps$ parameter used in the active set estimate, the easiest choice is that of setting
$\eps$ to a fixed value. We tested several values and obtained the best results with 
$\eps=10^{-4}$ and $\eps=10^{-5}$ for \texttt{FAST-1CDA} and \texttt{FAST-2CDA} respectively.
We further tested an implementation of both  \texttt{FAST-1CDA-$\eps$} and \texttt{FAST-2CDA-$\eps$}. Since there were no significant improvements
in the performance, we decided to keep the $\eps$ value fixed.

We would also like to spend a few words about the criterion for choosing the variables in $\bar{\cal N}_{ord}^k$. In some cases, we found more efficient  using the following measure:
\begin{equation}\label{alternative}
 \begin{array}{cc}
 |g_i (x^k)+\tau| & \mbox{if} \ x^k_i>0;\\
 |g_i (x^k)-\tau| & \mbox{if} \ x^k_i<0;\\
 \max\{0, -(g_i (x^k)+\tau), g_i(x^k)-\tau\} & \mbox{if} \ x^k_i=0,\\
\end{array}
\end{equation}
in place of the one reported in \eqref{tsengest}, which we considered for proving the theoretical results. The main feature of this new measure is that 
it only takes into account first order information (while \eqref{tsengest} considers proximity of the component value to zero too).
Anyway, replacing \eqref{tsengest} with the new measure is not a big deal, since convergence can still be proved using \eqref{alternative}.
Furthermore, linear rate can be easily obtained assuming that strict complementarity holds. Intuitively, considering only first order information in the choice of the variables should make more
sense in our context, since
proximity to zero is already taken into account when using the estimate to select the active variables.
\subsection{Enhanced version of \texttt{FAST-BCDA}}\label{sec:acc}
By running our codes, we noticed that the cardinality of the set related to the non-active variables
decreases quickly as the iterations go by. In general,
very few iterations are needed to obtain the real non-active set.
By this evidence, and keeping in mind the theoretical result reported in Section~\ref{estimate}, we decided to develop an ``enhanced'' 
version of our algorithms, taking inspiration by the second stage of FPC-AS algorithm \cite{Wenetal1}.
Once a ``good'' estimate ${\cal N}^k$ of ${\cal N}(x^\star)$ was obtained, we solved the following smooth optimization subproblem
$$
\begin{array}{l l}
\min & \frac{1}{2}\|Ax - b\|^2 + \tau sign(x_{{\cal N}^k})^\top x_{{\cal N}^k}\\
\mbox{s.t.} & x_i=0 \;\;\;\; i\in {\cal A}^k.
\end{array}
$$
In practice, we considered an estimate ${\cal N}^k$ ``good'' if both
there are no changes in the cardinality of the set with respect to the last two iterations,
and $|{\cal N}^k|$ is lower or equal than a certain threshold $\xi$ (we fixed $\xi=0.05n$ in our experiments).
%
\subsection{Preliminary experiments}\label{prel}
In order to pick the best version among the four we developed, we preliminary compared the performance of
\texttt{FAST-1CDA} (FAST1), \texttt{FAST-2CDA} (FAST2), \texttt{FAST-1CDA-E} (FAST1-E) and \texttt{FAST-2CDA-E} (FAST2-E).
In Figure~\ref{fig:PPcompprel}, we report the performance profiles with respect to the CPU time. 
\begin{figure}
\centering
\begin{subfigure}{.5\textwidth}
  \centering
   \psfrag{Fast-1CDA}[l][l]{{\tiny FAST1}}
  \psfrag{Fast-2CDA}[l][l]{{\tiny FAST2}}
  \psfrag{Fast-1CDA-acc}[l][l]{{\tiny FAST1-E}}
  \psfrag{Fast-2CDA-acc}[l][l]{{\tiny FAST2-E}}
     \includegraphics[trim = 2.0cm 0.5cm 7.5cm 0.0cm, clip, width=0.85\textwidth]{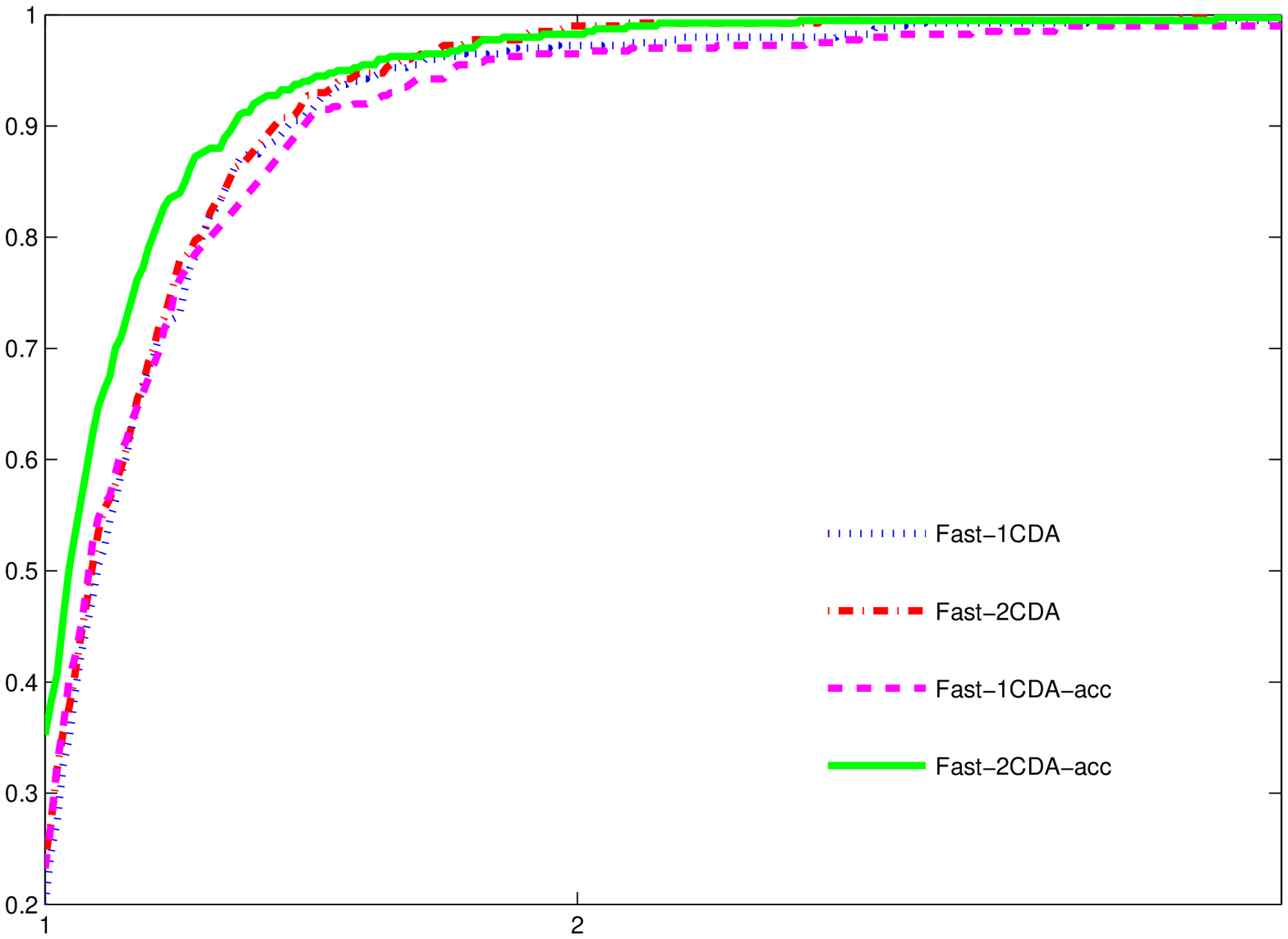}
  \caption{Preliminary experience.}
 \label{fig:PPcompprel}
\end{subfigure}%
\begin{subfigure}{.5\textwidth}
  \centering
   \psfrag{Fast-2CDA-acc}[l][l]{{\tiny FAST2-E}}
  \psfrag{ISTA}[l][l]{{\tiny ISTA}}
  \psfrag{FISTA}[l][l]{{\tiny FISTA}}
  \psfrag{PSSgb}[l][l]{{\tiny PSSgb}}
  \psfrag{SpaRSA}[l][l]{{\tiny SpaRSA}}
  \psfrag{FPC-AS}[l][l]{{\tiny FPC-AS}}
     \includegraphics[trim = 2.0cm 0.5cm 7.5cm 0.0cm, clip, width=0.85\textwidth]{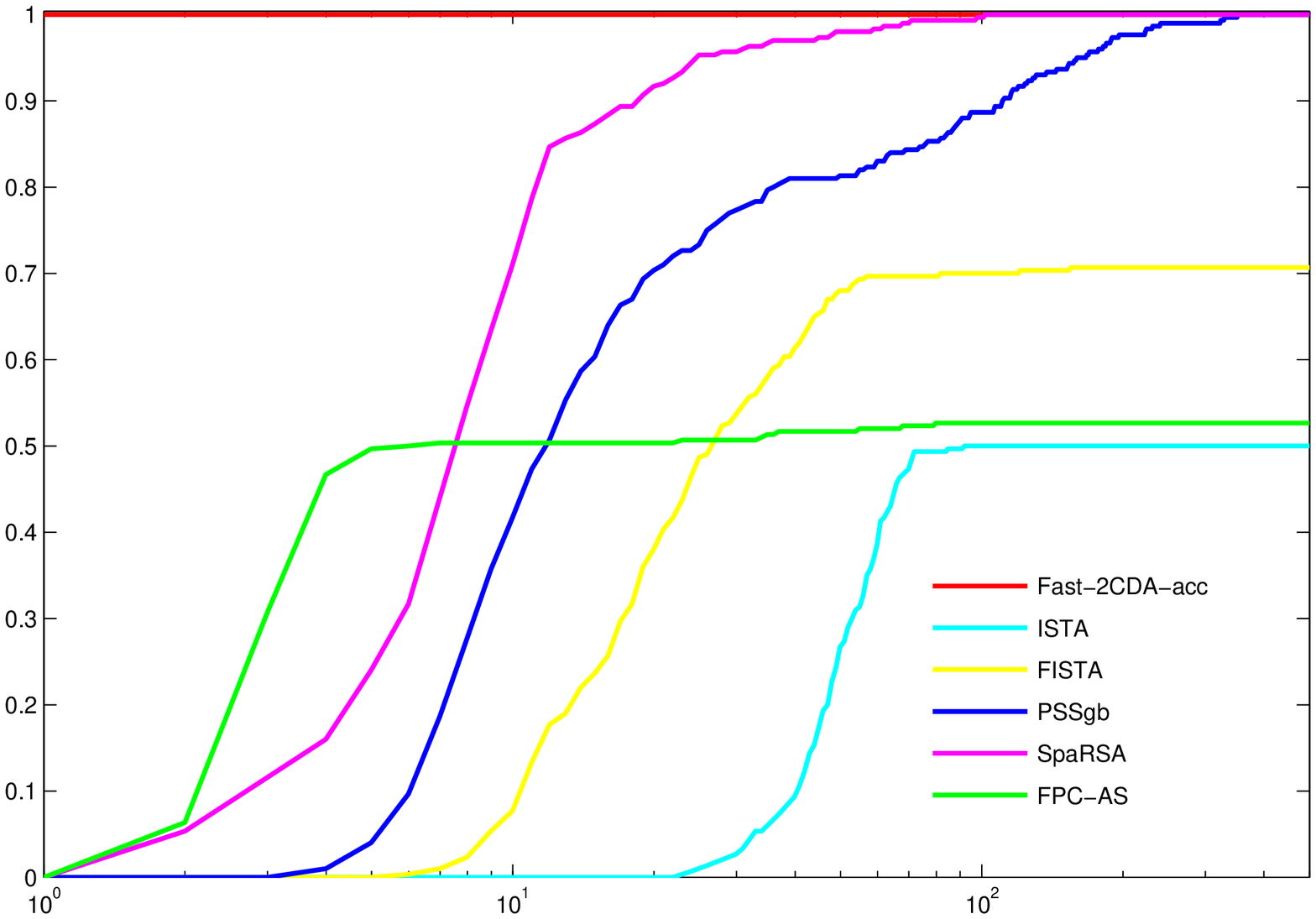}
  \caption{Comparison with other solvers.}
   \label{fig:PPmatavec}
\end{subfigure}
\caption{Performance profiles on all instances (CPU time)}
\label{fig:test}
\end{figure}

As we can see, even if the four version of \texttt{FAST-BCDA} have similar behaviour, 
\texttt{FAST-2CDA-E} is the one that gives the overall best performance. We then choose \texttt{FAST-2CDA-E} 
as the algorithm to be compared with the other state-of-the art algorithms for $\ell_1$-regularized problems.
\subsection{Comparison with other algorithms}\label{compal}
In this section, we report the numerical experience related to the comparison of \texttt{FAST-2CDA-E} 
with ISTA~\cite{FISTA,Daubechies}, FISTA~\cite{FISTA}, PSSgb~\cite{schmidt}, SpaRSA~\cite{SPARSA} and FPC$\_$AS~\cite{Wenetal1}.

In our tests, we first ran \texttt{FAST-2CDA} to obtain
a target objective function value, then ran the other algorithms until each of them reached
the given target (see e.g. \cite{SPARSA}).
Any run exceeding the limit of $1000$ iterations is considered failure.
Default values were used for all parameters in SpaRSA~\cite{SPARSA} and FPC$\_$AS~\cite{Wenetal1}.
For PSSgb~\cite{schmidt} we considered the two-metric projection method and we set the parameter \texttt{options.quadraticInit} to $1$,
since this setting can achieve better 
performance for problems where backtracking steps are required on each iteration (see \url{http://www.cs.ubc.ca/~schmidtm/Software/thesis.html}). 
In all codes, we considered the null vector as starting point and all matrices were
stored explicitly.
In Figure~\ref{fig:PPmatavec}, we report the plot of the performance profiles related to the CPU time for all instances.
From these profiles it is clear that \texttt{FAST-2CDA-E} outperforms all the other algorithms and that SpaRSA and PSSgb 
are the two best competitors.
We then further compare, in Figure~\ref{fig:boxplots-all}, \texttt{FAST-2CDA-E}, SpaRSA and PSSgb
reporting the box plots related to the distribution of the CPU time. 
On each box, the central mark
is the median, the edges of the box are the 25th and 75th percentiles, the whiskers
extend to the most extreme data points not considered outliers, and outliers are
plotted individually.
\begin{figure}[h!]
  \begin{center}
  \psfrag{Fast-2CDA-acc}[l][l]{{\tiny FAST2-E}}
  \psfrag{PSSgb}[l][l]{{\tiny PSSgb}}
  \psfrag{SpaRSA}[l][l]{{\tiny SpaRSA}}
  \psfrag{CPU time}[c][c]{{\tiny CPU time (sec)}}
  \psfrag{All instances}[c][c]{{\tiny All instances}}
  \psfrag{P1 instances}[c][c]{{\tiny P1 instances}}
  \psfrag{P2 instances}[c][c]{{\tiny P2 instances}}
     \includegraphics[trim = 2.0cm 0.5cm 2.0cm 0.0cm, clip, width=0.8\textwidth]{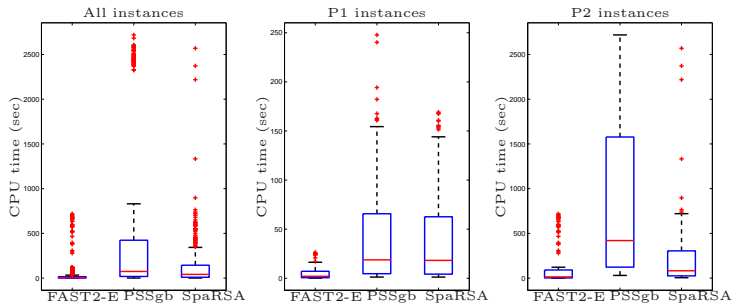}
     \caption{Box plots (CPU time).}
    \label{fig:boxplots-all}
  \end{center}
\end{figure}
In particular, Figure~\ref{fig:boxplots-all} shows the plots related to the distribution of the CPU time for all instances, for P1 instances and for P2 instances, respectively.
For what concerns P1 instances, SpaRSA and PSSgb show a similar behavior, while observing the plot related to P2 instances 
SpaRSA shows a better performance. For both classes, \texttt{FAST-2CDA-E} shows the lowest median. 
As a further comparison among \texttt{FAST-2CDA-E}, SpaRSA and PSSgb, we report in Figure~\ref{fig:errP1} and in Figure~\ref{fig:errP2}, the plots of 
the relative error vs. the CPU time for the P1 and the P2 instances respectively. In each plot, the curves are averaged over the ten runs for fixed $\rho$ and $n$.
Observing these plots, we notice that \texttt{FAST-2CDA-E} is able to reach better solutions with lower CPU time.
\subsection{Real Examples}\label{realex} 
In this subsection, we test the efficiency of our algorithm 
on realistic image reconstruction problems. We considered six images: a Shepp–Logan
phantom available through the MATLAB Image Processing Toolbox
and five widely used images downloaded from
\url{http://dsp.rice.edu/cscamera} (the letter R, the mandrill, the dice, the ball, the mug).
Each image has $128 \times 128$ pixels. 
We followed the procedure described in \cite{Wenetal1} to generate the instances (i.e. matrix $A$ and vector $b$).
%
What we want to highlight here is that the optimal solutions are unknown. Hence the reconstructed images can
only be compared by visual inspection. Also in this case, we first ran \texttt{FAST-2CDA} to obtain
a target objective function value, then ran the other algorithms until each of them reached
the given target.
The CPU-time needed 
for reconstructing the images is reported in Table \ref{tabtempi}.
In Figure~\ref{fig:realim2}, we report the images of the dice and of the mandrill 
reconstructed by \texttt{FAST-2CDA-E}, PSSgb and SpaRSA. It is interesting to notice that the quality of the reconstructed 
images can depend on the algorithm used.
In Table \ref{tabtempi}, we can easily see that \texttt{FAST-BCDA} was faster in all problems.

\begin{figure}[h!]
  \begin{center}
  \psfrag{CPU time (sec)}[c][c]{{\tiny CPU time (sec)}}
  \psfrag{relative error}[c][c]{{\tiny relative error}}
  \psfrag{Fast2-E}[l][l]{{\tiny FAST2-E}}
  \psfrag{PSSgb}[l][l]{{\tiny PSSgb}}
  \psfrag{SpaRSA}[l][l]{{\tiny SpaRSA}}
  \includegraphics[trim = 2.0cm 0.5cm 2.0cm 0.0cm, clip, width=0.79\textwidth]{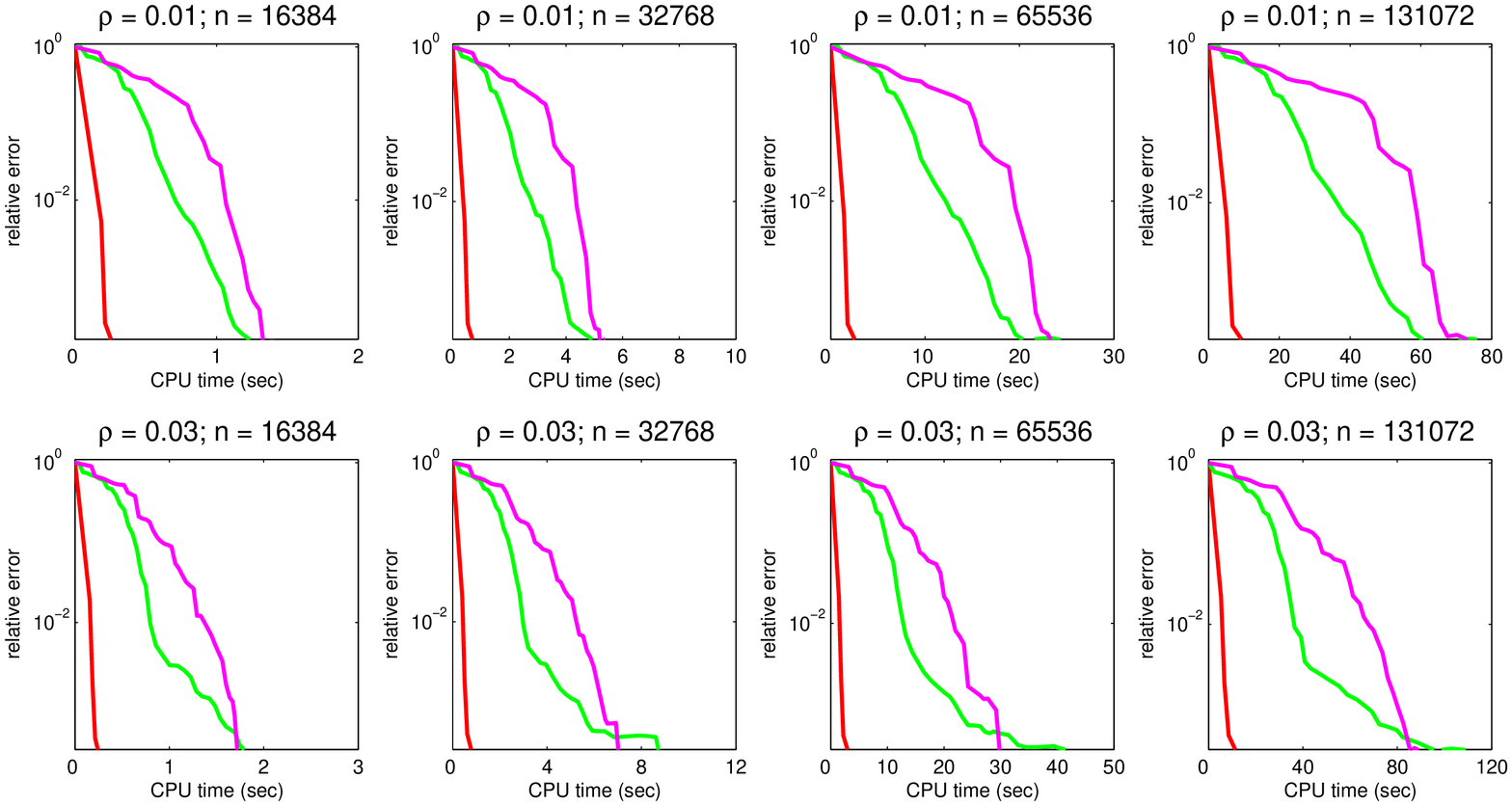}\\
  \includegraphics[trim = 2.0cm 0.5cm 2.0cm 0.0cm, clip, width=0.79\textwidth]{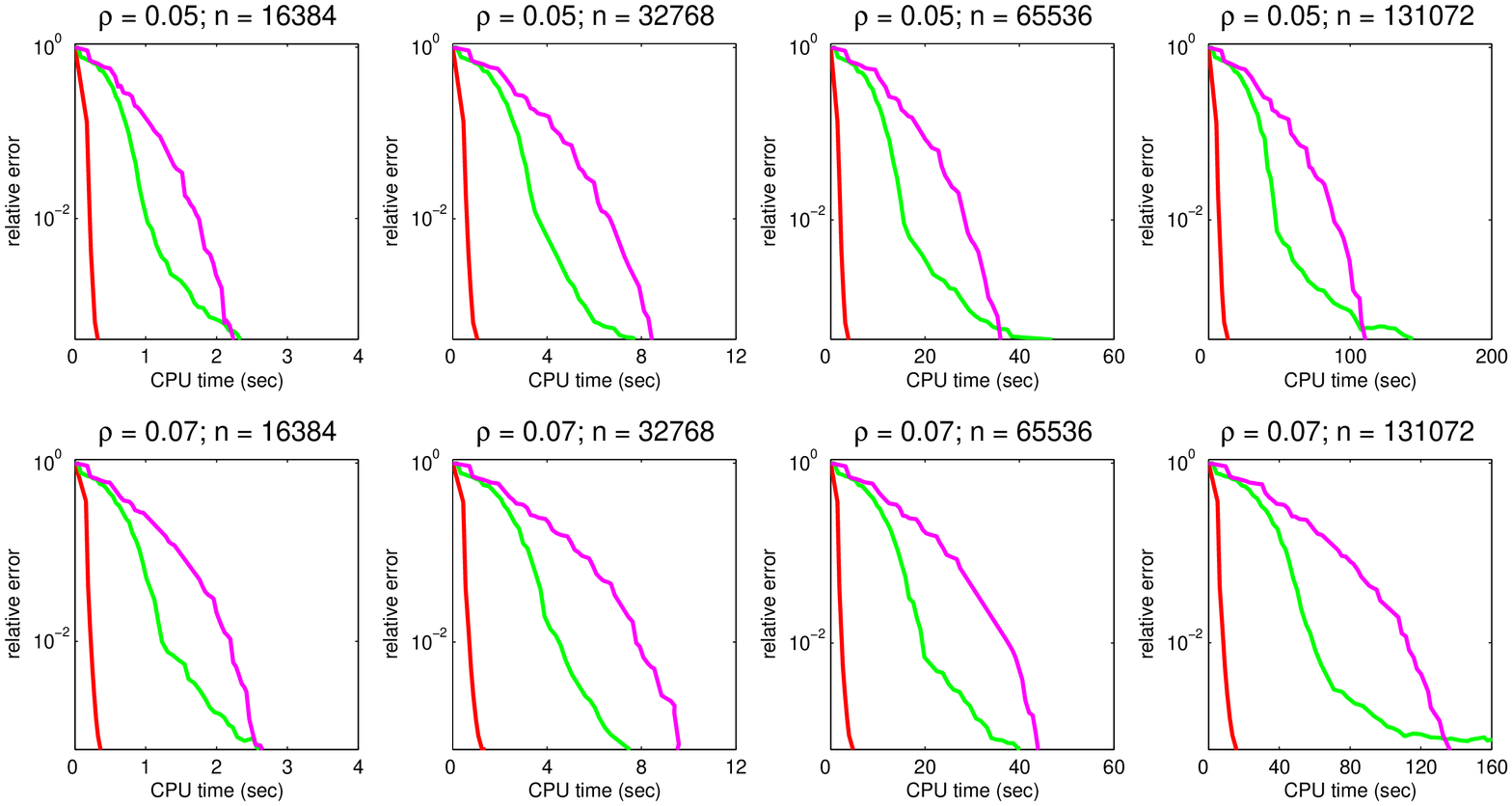}\\
  \includegraphics[trim = 2.0cm 7.5cm 2.0cm 0.0cm, clip, width=0.79\textwidth]{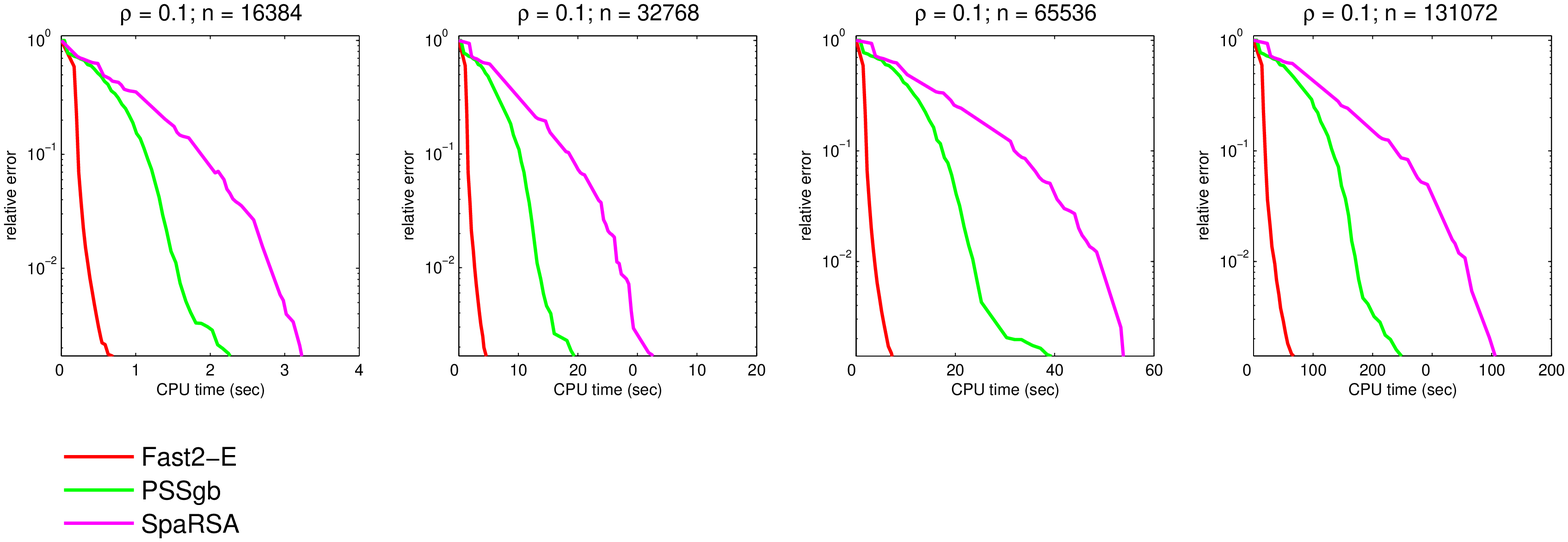}\\
     \caption{Relative error vs. CPU time -  P1 instances}
    \label{fig:errP1}
  \end{center}
\end{figure}

\begin{figure}[h!]
  \begin{center}
  \psfrag{CPU time (sec)}[c][c]{{\tiny CPU time (sec)}}
  \psfrag{relative error}[c][c]{{\tiny relative error}}
   \psfrag{FAST2-E}[l][l]{{\tiny FAST2-E}}
  \psfrag{PSSgb}[l][l]{{\tiny PSSgb}}
  \psfrag{SpaRSA}[l][l]{{\tiny SpaRSA}}
  \includegraphics[trim = 2.0cm 0.5cm 2.0cm 0.0cm, clip, width=0.79\textwidth]{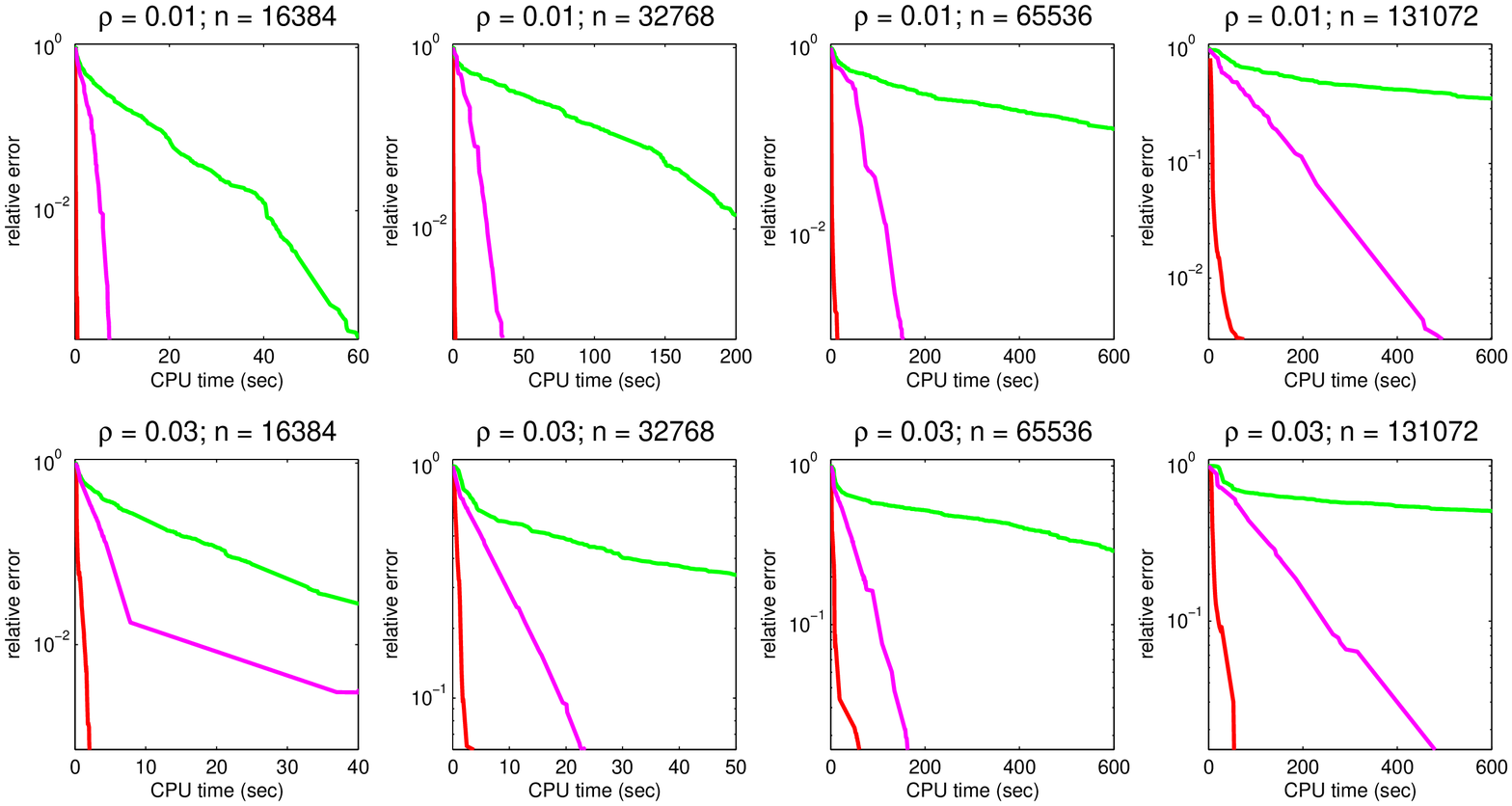}\\
  \includegraphics[trim = 2.0cm 0.5cm 2.0cm 0.0cm, clip, width=0.79\textwidth]{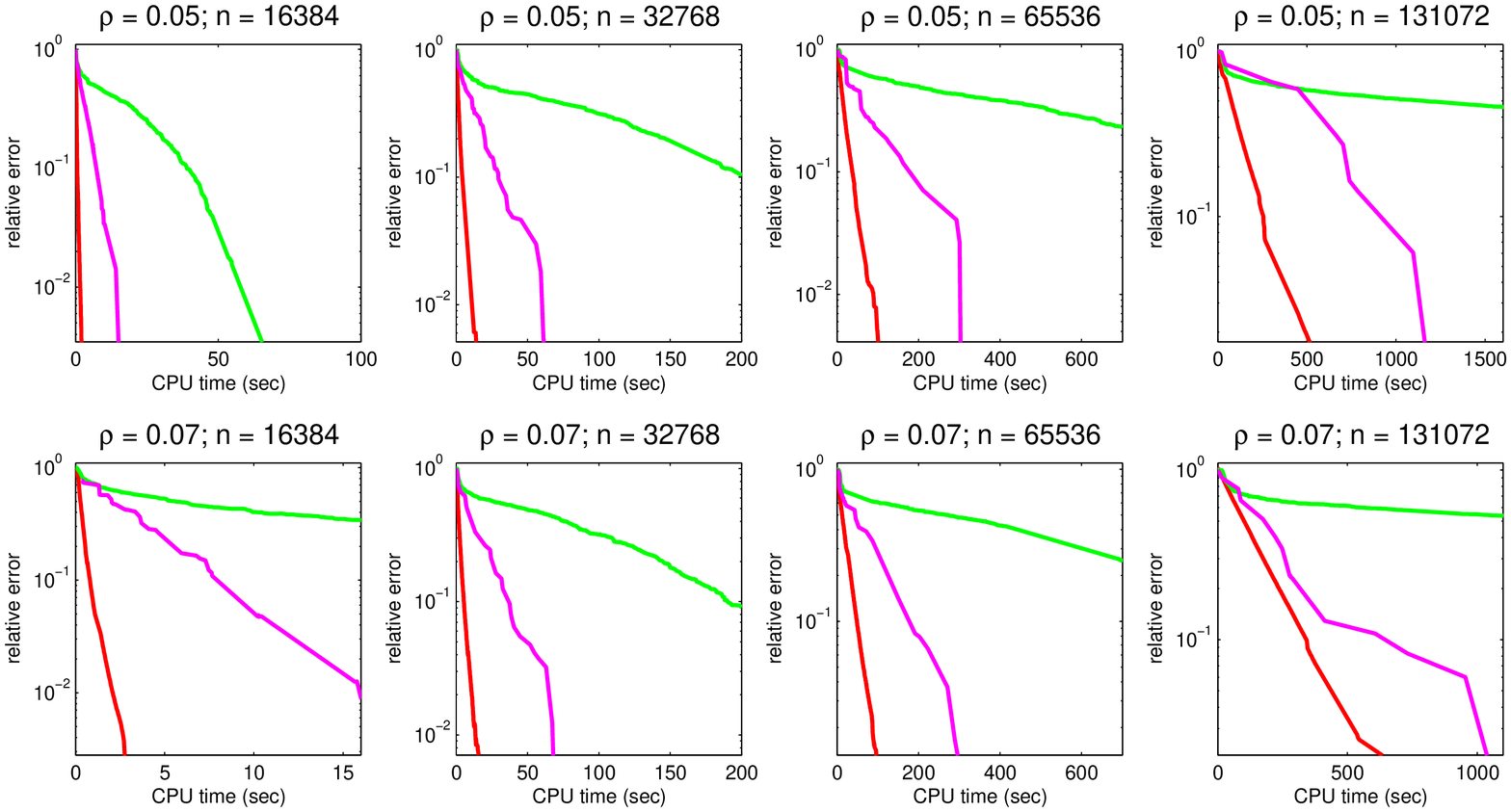}\\
  \includegraphics[trim = 2.0cm 7.5cm 2.0cm 0.0cm, clip, width=0.79\textwidth]{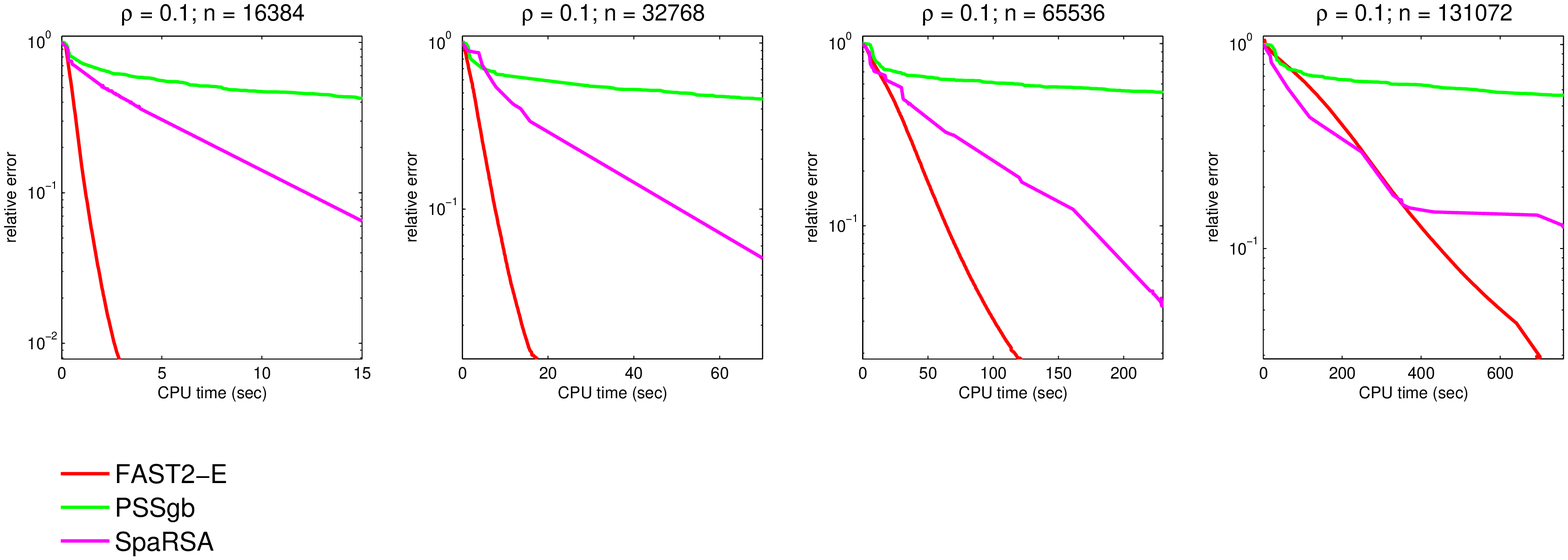}\\
     \caption{Relative error vs. CPU time -  P2 instances}
    \label{fig:errP2}
  \end{center}
\end{figure}
%

\begin{figure}[h!]
  \begin{center}
  
  \includegraphics[trim = 2.0cm 0.5cm 2.0cm 0.0cm, clip, width=8cm]{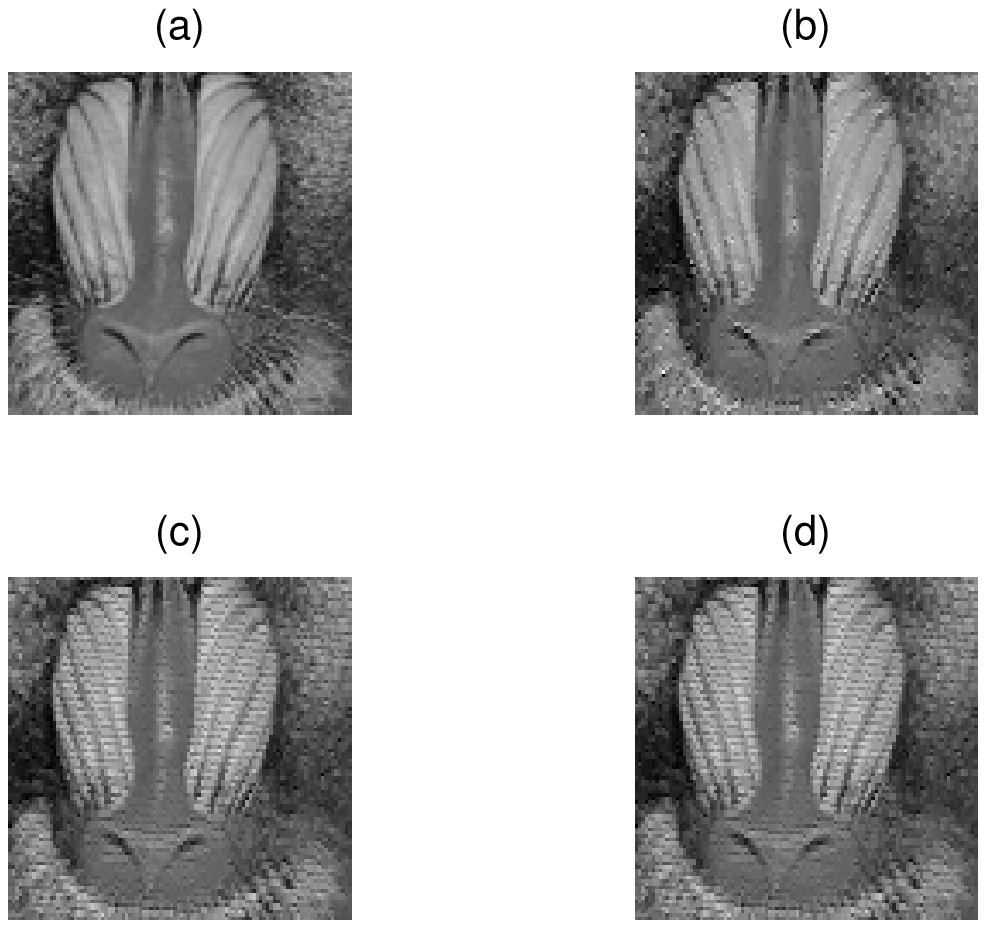}\\
  \includegraphics[trim = 2.0cm 0.5cm 2.0cm 0.0cm, clip, width=8cm]{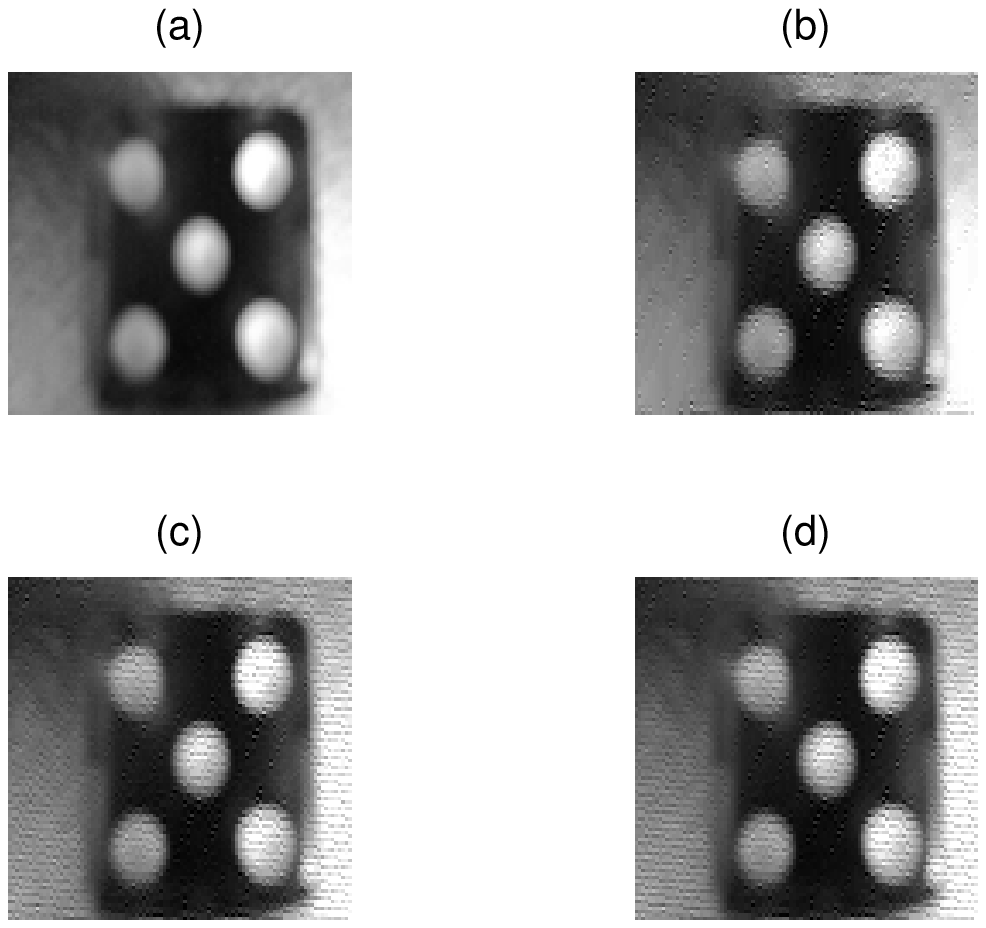}\\
     \caption{Real Examples Experiment. (a) original image - (b) \texttt{FAST-2CDA-E} reconstruction - (c) PSSgb  reconstruction - (d) SpaRSA  reconstruction}
    \label{fig:realim2}
  \end{center}
\end{figure}

      \begin{table}
        {\scriptsize
          \centering
          \begin{tabular}{|c|c|c|c|c|c|c|c|c|} 
            \hline\hline
            Fast1& Fast2& Fast1-E&Fast2-E& ISTA& FISTA& PSSgb& SpaRSA& FPC$\_$AS\\
            \hline
2.18	&	2.46		&		2.54		&		2.02		&		34.65		&		9.26		&		5.01		&		5.08		&		10.46		\\
1.65	&	1.67		&		1.51		&		1.95		&		73.01		&		16.31		&		7.77		&		14.48		&		12.87		\\
1.86	&	1.97		&		1.91		&		1.65		&		78.05		&		18.41		&		7.90		&		15.56		&		14.69		\\
3.52	&	2.05		&		2.13		&		2.32		&		63.13		&		12.69		&		6.51		&		7.53		&		9.77		\\
2.29	&	2.12		&		1.79		&		2.16		&		51.12		&		13.79		&		6.36		&		11.13		&		9.33		\\
4.12	&	4.21		&		4.16		&		2.41		&		56.66		&		12.09		&		6.69		&		7.12		&		9.72		\\

            \hline\hline
          \end{tabular}\vspace*{0.1cm}
          \caption{Real Examples Experiment - CPU time.}
          \label{tabtempi}
        }
      \end{table}

\section{Conclusions}\label{conclusions}
In this paper, we devised an active set-block coordinate descent method (\texttt{FAST-BCDA}) for solving $\ell_1$-regularized
least squares problems. The way the active set estimate is calculated guarantees a sufficient decrease 
in the objective function at every iteration when setting to zero the variables estimated active.
Furthermore, since the subproblems related to the blocks explicitly take into account the $\ell_1$-norm, the proposed algorithmic framework  
does not require a sign identification strategy for the non-active variables. 

Global convergence of the method is established. A linear convergence result is also proved. Numerical results are presented
to verify the practical efficiency of the method, and they indicate that \texttt{FAST-BCDA}
compares favorably with other state-of-the-art techniques.

We further would like to remark that the proposed active set strategy is independent
from the specific algorithm we have designed and can be easily included into other algorithms for $\ell_1$-regularized least
squares, both sequential and parallel, to improve their performance.
We  finally highlight that the algorithmic scheme we described can be easily modified in order to work
in a parallel fashion. Future work will be devoted to adapt the presented approach to handle convex $\ell_1$-regularized problems.
\par\medskip\noindent
\textbf{Acknowledgments} The authors would like to thank the Associate Editor and the anonymous Reviewers for their thorough and useful comments that significantly helped to improve the paper.
\par\medskip\noindent
\appendix
\small
\section{Main theoretical result related to the active set estimate}\label{appmainresact}
Here, we prove the main theoretical result related to the active set estimate.
\par\smallskip
\emph{Proof of Proposition \ref{prop1}.} We first define the sets  ${\cal N}={\cal N}(z)$ and ${\cal A}$=${\cal A}(z)$.
By taking into account the definitions of the sets
${\cal A}$ and ${\cal N}$  and the points  $y$ and $z$, we have:
\begin{equation}\label{funzione-ob}
f(y)= q(y)+ \tau \sum_{i=1}^n sign(y_i)\,y_i=q(y)+ \tau \sum_{i\in
{\cal N}} sign(y_i)\,y_i+ \tau \sum_{i\in {\cal A}}
sign(z_i)\,y_i.
\end{equation}
from which
\begin{eqnarray*}
&& f(y) = f(z)+ (g_{{\cal A}}(z)+\tau S_{{\cal A}}e)^\top(y-z)_{{\cal
A}}+
 \frac{1}{2}(y-z)_{{\cal A}}^{T} H_{ {\cal A}  {\cal A}} (y-z)_{ {\cal A}},
\end{eqnarray*}
where $e\in \R^{|{\cal A}|}$ is the unit vector, and   $S_{ {\cal A}}$ is the diagonal matrix defined as
$$S_{{\cal A}} = Diag(sign(z_{ {\cal A}})),$$ with the function $sign(\cdot)$ intended componentwise.\par\smallskip\noindent
Since $H=A^\top A$ we have that the following inequality holds
\begin{eqnarray}\nonumber
 &&f(y)
 \leq f(z)+ (g_{ {\cal A}}(z)+\tau S_{ {\cal A}}e)^\top(y-z)_{ {\cal A}}+\frac{\lambda_{max}(A^\top A)}{2}\|(y-z)_{ {\cal A}}\|^2.
\end{eqnarray}
Recalling (\ref{eps-prop2})  we obtain:
\begin{eqnarray}
 &&f(y)
 \leq f(z)+ (g_{ {\cal A}}(z)+\tau S_{ {\cal A}}e)^\top(y-z)_{ {\cal A}}+\frac{1}{2\eps}\|(y-z)_{ {\cal A}}\|^2.
\end{eqnarray}
\noindent
Then, we can write
$$f(y)\leq f(z)+ \Big(g_{{\cal A}}(z)+\tau S_{{\cal A}}e+ \frac{1}{\eps}(y-z)_{{\cal A}}\Big)^\top(y-z)_{ {\cal A}}-\frac{1}{2\eps}\|(y-z)_{ {\cal A}}\|^2. $$
In order to prove the proposition, we need to show that
\begin{equation}\label{eqlemma}
 \Big(g_{{\cal A}}(z)+\tau S_{{\cal A}}e+ \frac{1}{\eps}(y-z)_{{\cal A}}\Big)^\top(y-z)_{{\cal A}} \leq 0.
\end{equation}
Inequality \eqref{eqlemma} follows from the fact that $\forall i\in  {\cal A}$:
\begin{equation}\label{eqlemmai}
 \Big(g_i(z)+\tau  sign(z_i)+ \frac{1}{\eps}(y_i-z_i)\Big)^\top(y_i-z_i) \leq 0.
\end{equation}
We distinguish two cases:
\begin{itemize}
\item[a)] If $z_i > 0$, we have that  $sign(z_i)= 1$ and, since $y_i = 0$,
$(y_i-z_i)\leq 0$.

Then, from the fact that $i\in  {\cal A}$, we have
\begin{eqnarray}
 y_i&=& 0\nonumber \\
 z_i &\le& \eps\,( g_i(z) + \tau) \nonumber \\
 (z_i - y_i) &\le& \eps\,( g_i(z) + \tau) \nonumber \\
 \frac{1}{\eps}\,(z_i - y_i) &\le& g_i(z) + \tau \nonumber
\end{eqnarray}
so that
\begin{equation}
 g_i(z) + \tau +\frac{1}{\eps}\,(y_i - z_i)\geq 0. \nonumber
\end{equation}
and  \eqref{eqlemmai} is satisfied. \item[b)] If $z_i < 0$, we
have that  $sign(z_i)= - 1$ and, since $y_i = 0$, $(y_i-z_i)\geq 0$.

Then, by reasoning as in case a), from the fact that $i\in  {\cal A}$, we can write
\begin{eqnarray}
 y_i&=& 0\nonumber \\
- z_i &\le& \eps\,( \tau -g_i(z)) \nonumber \\
 (y_i - z_i) &\le& \eps\,( \tau -g_i(z)) \nonumber \\
 \frac{1}{\eps}\,(y_i - z_i) &\le&  \tau -g_i(z) \nonumber
\end{eqnarray}
from which we have:
\begin{equation}
 g_i(z) - \tau +\frac{1}{\eps}\,(y_i - z_i)\leq 0. \nonumber
\end{equation}
Again, we have that \eqref{eqlemmai} is satisfied.
\end{itemize}
\hfill$\Box$
\par\medskip

\section{Theoretical results related to the convergence analysis}\label{appcon}
First, we prove the result that guarantees a sufficent decrease when minimizing with respect to a given block. 
\par\smallskip
\emph{Proof of Proposition \eqref{lemma2}.} Let us consider the subproblem obtained by 
fixing all variables in $I$ but the ones whose indices belong to $J$ to $z_{I\setminus J}$.
Let $w^*\in \R^{|J|}$ be a solution of this subproblem.

We consider the set $J=\{j_1,\ldots,j_{|J|}\}$ as the union of two sets
$$J=J_E \cup J_D,$$
where
$$J_E=J_{E^+} \cup J_{E^-}, \qquad J_D=J_{D^+} \cup J_{D^-}$$
and
$$
\begin{array}{l}
 J_{D^+}=\{j_i\in J:  \, sign(w^*_i)>0 \}; \qquad 
 J_{D^-}=\{j_i\in J:  \, sign(w^*_i)<0 \};\\
 \\
 J_{E^+}=\{j_i\in J: w^*_i=0; \, sign(z_{j_i})>0 \}; \qquad
 J_{E^-}=\{j_i\in J: w^*_i=0; \, sign(z_{j_i})<0 \}.
\end{array}
$$
Let $\tilde f:\R^{|J|} \rightarrow \R$, with $w\in \R^{|J|}$, be the
following function:
$$
\begin{array}{l l}
\tilde f(w)  = & q(z)+ \tau \sum_{j\in {I\setminus J}} sign(z_j)\,z_j+ g_J(z)^\top(w-z_J)+\frac{1}{2} (w-z_J)^\top H_{JJ} (w-z_J) \\
\\
 & + \;\tau\sum_{j_i\in  J_E} sign(z_{j_i})\,w_i+\tau \sum_{j_i\in  J_D} sign(w^*_i)\,w_i.
 \end{array}
 $$
Then, $w^*$ can be equivalently seen as the solution of the
following problem
\begin{equation}\label{pl-ver1}
\begin{array}{ll}
\min &\tilde f(w)\\
\\
s.t.&  w_i \ge 0\quad \mbox{for } j_i\in J_{D^+}\cup J_{E^+},\\
\\
    &  w_i \le 0\quad \mbox{for } j_i\in J_{D^-}\cup J_{E^-},\\
\end{array}
\end{equation}
By introducing the diagonal matrix $S= Diag(s)\in \R^{|J|\times |J|} $,
where $s\in \{-1,0,1\}^{|J|}$ is the vector defined as
$$
s_i=\left\{
\begin{array}{l l}
sign(w^*_i) & \mbox{if } j_i\in J_D\\
\\
sign(z_{j_i}) & \mbox{if } j_i\in J_E,\\
\end{array}
\right.
$$
Problem~\eqref{pl-ver1} can be written in a more compact form as
\begin{equation}\label{pl}
\begin{array}{ll}
\min &\tilde f(w) \\
\\
 s.t.& S w \ge 0.\\
\end{array}
\end{equation}
From the KKT  condition for Problem~\eqref{pl} at $w^*$
we have:
\begin{equation}\label{KKT}
g_J(z) + H_{JJ} (w^* - z_J) + \tau s -  S \lambda= 0;
\end{equation}
where $\lambda \in \R^{|J|}$ is the vector of multipliers with respect to the constraints $S w \ge 0$.

We now analyze \eqref{KKT} for each index $i\in J$. We distinguish two cases:
\begin{itemize}
\item[-] $j_i \in J_D$. In this case we have that $s_i = sign(w^*_i)$
and $\lambda_i = 0$. Then, from \eqref{KKT} we have
\begin{equation}\label{KKTw}
g_{j_i}(z) + H_{j_i j_i} (w^*_i - z_{j_i}) + \tau s_i = 0.
\end{equation}
\item[-] $j_i\in J_E$. In this case we have that $s_i =
sign(z_{j_i})$ and $\lambda_i \ge 0$.
\end{itemize}
Therefore,
$$
g_{j_i}(z) + H_{j_i j_i} (w^*_i - z_{j_i}) + \tau s_i \geq 0  \quad \mbox{ if } s_i =sign(z_{j_i})\geq 0,\\
$$
$$
g_{j_i}(z) + H_{j_i j_i} (w^*_i - z_{j_i}) + \tau s_i \leq 0  \quad \mbox{ if } s_i =sign(z_{j_i})\leq 0.\\
$$
The previous inequalities  and the fact that $w^*_i=0$ for all
$j_i\in J_E$  imply that, whatever is the sign of $z_i$, we have
\begin{equation}\label{KKTx}
\Big(g_{j_i}(z) + H_{j_i j_i}(w^*_i - z_{j_i}) + \tau
s_i\Big)(w^*_i - z_{j_i}) \leq 0.
\end{equation}
Taking into account \eqref{KKTw} and \eqref{KKTx}, we have that
\begin{equation}\label{gradleq0}
\Big(g_J(z) + H_{JJ} (w^* - z_J) + \tau s\Big)^\top (w^* -
z_J) \leq 0.
\end{equation}
Now, consider the difference between $\tilde f(w^*)$ and  $\tilde
f(z_J)$. We have that
\begin{eqnarray*}
\tilde f(w^*) - \tilde f(z_J) &= &g_J(z)^\top(w^*-z_J)+\frac{1}{2} (w^*-z_J)^\top H_{JJ} (w^*-z_J) \\
&& +\; \tau \sum_{j_i\in  J_E} sign(z_{j_i})(w^*_i-z_{j_i})+\tau
\sum_{j_i\in  J_D} sign(w^*_i)(w^*_i-z_{j_i})_,
\end{eqnarray*}
which can be rewritten as
$$
{\small
\tilde f(w^*) - \tilde f(z_J) = \Big(g_J(z)+ H_{JJ}(w^*-z_J)
+ \tau s\Big)^\top (w^*-z_J) - \frac{1}{2} (w^*-z_J)^\top
H_{JJ} (w^*-z_J).
}
$$
Recalling \eqref{gradleq0}  and the fact that $y_J = w^*$ we have
\begin{equation}\label{new2}
\tilde f(w^*) - \tilde f(z_J)\le - \frac{1}{2} (w^*-z_J)^\top
H_{JJ} (w^*-z_J)\le  - \frac{1}{2}
\lambda_{min}(H_{JJ})\|y-z\|^2.
\end{equation}
Since
$$q(y) = q(z) + g_J(z)^\top(y-z)_J+\frac{1}{2} (y-z)^\top_J H_{JJ} (y-z)_J, $$
by definition of $\tilde f$ we have that
{\small \begin{eqnarray}\label{new3}
 f(y)=q(y) &+& \tau\sum_{j=1}^n sign(y_j)y_j = q(y)+ \tau \sum_{j\in {I\setminus J}} sign(z_j)\,z_j +\\ \nonumber
     &+&  \;\tau\sum_{j_i\in  J_E} sign(z_{j_i})\,w_i^*+\tau \sum_{j_i\in  J_D} sign(w^*_i)\,w_i^* = \tilde f(w^*)
 \end{eqnarray}}
 and
 {\small \begin{equation}\label{new4}
\tilde f(z_J)= q(z)+ \tau \sum_{j_i\in {I\setminus J_D}}
sign(z_{j_i})z_{j_i} +\tau \sum_{j_i\in  J_D}
sign(w^*_i)z_{j_i}\le q(z)+ \tau \| z\|_1 =f(z).
\end{equation}}
Now (\ref{new2}), (\ref{new3}) and (\ref{new4})
prove the Proposition.
\hfill $\Box$
\par\medskip

Then, we prove the main convergence result related to {\tt FAST-BCDA}.
\par\smallskip
\emph{Proof of Theorem \ref{teorema1}.} We first prove that {\tt FAST-BCDA} is well defined (in the sense that  $x^{k+1}\neq x^k$ iff  the point $x^k$ is not an  optimum). Let $x^k$ not be optimum, then by contradiction
we assume that $x^{k+1}= x^k$. Thus we have that either ${\cal N}(x^k)=\emptyset$, or forall $i \in {\cal A}(x^k), \ x^k_i=0$. This, in turns, implies that $x^k=0$ and, by taking into account the 
definition of ${\cal A}(x^k)$, we have that $x^k$ is optimal, thus getting a contradiction. The proof of the other implication easily follows from Propositions \ref{prop1} and \ref{lemma2}. 

Let  $\{y^{h,k}\}$, with $h=0,\ldots,q$ be the sequence of points produced by Algorithm \texttt{FAST-BCDA}.
By setting $y=y^{0,k}$ and $z=x^k$ in Proposition~\ref{prop1}, we have:
\begin{eqnarray}\label{app_prop1}
&&f(y^{0,k})\leq f(x^k) - \frac{1}{2\eps}\|y^{0,k}-x^k\|^2.
\end{eqnarray}
 By setting $y=y^{h+1,k}$ and $z=y^{h,k}$, for $h=0,\ldots,q-1$ in Proposition~\ref{lemma2}, we have:
\begin{eqnarray}\label{app_lemma2}
&&f(y^{h+1,k})\leq f(y^{h,k}) -
\frac{\sigma}{2}\|y^{h+1,k}-y^{h,k}\|^2.
\end{eqnarray}
\noindent
By using  (\ref{app_prop1}) and (\ref{app_lemma2}), we can write
\begin{eqnarray}\label{funzione-1}f(x^{k+1})\leq f(y^{q-1, k})\leq\dots\leq f(y^{0,k})\leq f(x^k),\end{eqnarray}
from which we have:
$$x^k\in {\cal L}^0=\{x\in \R^n: \ f(x)\leq f(x^0)\}.$$
 From the coercivity of the objective function of Problem~\eqref{l2l1} we have that the level set
${\cal L}^0$ is compact. Hence, the sequence $\{x^k\}$ has at least a limit point and
\begin{eqnarray}\label{funzione-2}
&&\lim_{k\to\infty}\bigl(f(x^{k+1})-f(x^{k})\bigr)=0.
\end{eqnarray}
Now, let $x^\star$  be any limit point of the sequence $\{x^k\} $ and $\{x^k\}_K $ be the subsequence such that
\begin{eqnarray}\label{sub-sequence-1}
&&\lim_{k\to\infty, k\in K}x^{k}=x^\star.
\end{eqnarray}
Let us assume, by contradiction, that $x^\star$ is not an optimal point of  Problem~\eqref{l2l1}. 
By taking into account that  inequality $\|\sum_{i=1}^l a_i\|\leq l\ \sum_{i=1}^l \|a_i\|^2$ holds for the squared norm of sums of $l$ vectors $a_i$,
and by recalling (\ref{app_prop1}), (\ref{app_lemma2}) and (\ref{funzione-1}),  we have
\begin{eqnarray}\label{funzione-3}
&& f(x^{k+1})\leq f(y^{0,k})\leq f(x^k) - \frac{1}{2\eps}\|y^{0,k}-x^k\|^2,\\
&& \label{funzione-4}
f(x^{k+1})\leq f(y^{h,k}) \leq f(x^k) - \frac{\sigma}{2}\|y^{h,k}-x^k\|^2.
\end{eqnarray}
with $h=1,\ldots,q$.

Now, (\ref{funzione-2}), (\ref{sub-sequence-1}), (\ref{funzione-3}) and (\ref{funzione-4}) imply
\begin{eqnarray}\label{convy}
\lim_{k \to \infty, k \in K} y^{h,k} = x^\star,
\end{eqnarray}
for $h=0,\ldots,q$.

For every index $j\in {\cal A}^k$, we can define  the point $\tilde y^{j,k}$ as follows:
\begin{equation}\label{def-y}
\tilde y_i^{j,k}=\left \{
\begin{array}{ll}
0 &\mbox{if} \ i=j  \\
x_i^k &\mbox{otherwise}\\
\end{array}
\right .
\end{equation}
Recalling the definition of points $\tilde y^{j,k}$ and  $y^{0,k}$, we have
$$ \|\tilde y^{j,k}-x^k\|^2=(\tilde y^{j,k}-x^k)_j^2=( y^{0,k}-x^k)_j^2\le ( y^{0,k}-x^k)_j^2+\sum_{i\in {\cal A}^k, i\not=j}(x^k_i)^2=\| y^{0,k}-x^k\|^2.$$
From the last inequality and (\ref{convy}) we obtain
\begin{eqnarray}\label{convytilde}
\lim_{k \to \infty, k \in K} \tilde y^{j,k} = x^\star,
\end{eqnarray}
for all $j\in {\cal A}^k$.

To conclude the proof, we consider the function $\Phi_i(x)$,  defined in \eqref{tsengest},  that measures the violation of the optimality conditions for a variable $x_i$.

Since, by contradiction, we assume that $x^\star$ is not an optimal point there must  exists
an index $\hat \imath$ such that
\begin{equation}\label{cont0}
|\Phi_{\hat \imath }(x^\star)|>0.
\end{equation}

Taking into account that the number of possible different choices of
${\cal A}^k$ and ${\cal N}^k$ is
finite, we can find a subset $\hat K\subseteq K\subseteq \{1,2,3,\dots\}$  such that ${\cal A}^k=\hat {\cal A}$ and ${\cal N}^k=\hat {\cal N}$ for  all $k\in \hat K$. We can have two different cases: either $\hat \imath \in \hat {\cal A}$ or $\hat \imath \in \hat {\cal N}$ for $k$ sufficiently large.

Suppose first that $\hat \imath \in \hat {\cal A}$ for $k$ sufficiently large. Then, by Definition~\ref{def:activeset}, we have for all $k\in \hat K$:
\begin{equation}\nonumber
 \max\{0,x_{\hat \imath}^k\}\leq \eps\,(g_{\hat \imath}(x^k)+\tau)\;
\mbox{ and } \;
 \max\{0,-x_{\hat \imath}^k\}\leq \eps\,(\tau-g_{\hat \imath}(x^k)).
\end{equation}
For all $k\in \hat K$, let $\tilde y^{{\hat \imath}, k}$ be the point defined
as in  (\ref{def-y}). By construction we have that
\begin{eqnarray}\label{prop-y}
&& \tilde y^{{\hat \imath}, k}_{\hat \imath}=0.
\end{eqnarray}
Now we consider three different subcases:
\begin{enumerate}
 \item[i)] $x_{\hat \imath}^k>0$. In this case, (\ref{def-y}) and (\ref{prop-y}) imply
 \begin{equation}\label{ineqcomp}
   (\tilde y_{\hat \imath}^{\hat \imath,k} - x_{\hat \imath}^k)\le 0.
 \end{equation}
 Recalling (\ref{eps-prop2}) in Assumption \ref{ass1}, there exists $\rho \ge 0$,  such that
 $$\epsilon \le  \frac{1}{H_{\hat \imath \hat \imath}+\rho}.$$
Furthermore, since $\hat \imath \in \hat {\cal A}$, we can write
\begin{eqnarray*}
 x_{\hat \imath}^k&\leq& \eps\,(g_{\hat \imath}(x^k)+\tau)\\ \nonumber
 x_{\hat \imath}^k-\tilde y^{\hat \imath,k}_{\hat \imath}&\leq& \eps\,(g_{\hat \imath}(x^k)+\tau)\\ \nonumber
 x_{\hat \imath}^k-\tilde y^{\hat \imath,k}_{\hat \imath}&\leq& \frac{1}{H_{\hat \imath \hat \imath}+\rho}(g_{\hat \imath}(x^k)+\tau) \nonumber
\end{eqnarray*}
Then we have:
\begin{equation}\nonumber
 (H_{\hat \imath \hat \imath}+\rho) (x_{\hat \imath}^k-\tilde y^{\hat \imath,k}_{\hat \imath})\leq g_{\hat \imath}(x^k)+\tau,
\end{equation}
which can be rewritten as follows
\begin{equation}\nonumber
  g_{\hat \imath}(x^k)+ H_{\hat \imath \hat \imath} (\tilde y^{\hat \imath,k}_{\hat \imath}-x_{\hat \imath}^k) +\tau\geq \rho (x_{\hat \imath}^k-\tilde y^{\hat \imath,k}_{\hat \imath})\geq 0,\\
\end{equation}
that is
\begin{equation}\label{ge0}
g_{\hat \imath}(\tilde y^{\hat \imath, k})+\tau\geq 0.
\end{equation}
On the other hand, since
$$0\le \max\{0,-x_{\hat \imath}^k\}\leq \eps\,(\tau-g_{\hat \imath}(x^k)) $$
we have that
$g_{\hat \imath}(x^k) - \tau \le 0$
and, as $H_{\hat \imath \hat \imath}\ge 0$ and \eqref{ineqcomp} holds, we get
\begin{equation}\label{le0}
g_{\hat \imath}(\tilde y^{\hat \imath, k})-\tau = g_{\hat \imath}(x^k)+ H_{\hat \imath \hat \imath} (\tilde y^{\hat \imath,k}_{\hat \imath}-x_{\hat \imath}^k) -\tau \le 0.
\end{equation}
By \eqref{prop-y}, \eqref{ge0} and \eqref{le0}, we have that
$$|\Phi_{\hat \imath}(\tilde y^{\hat \imath, k})|=0.$$
Furthermore, by \eqref{convytilde} and the continuity of $\Phi$, we can write
$$|\Phi_{\hat \imath}(x^\star)|=0.$$
Thus we get a contradiction with \eqref{cont0}.
\item[ii)] $x_{\hat \imath}^k<0$. It is a verbatim repetition of the previous case.
\item[iii)] $x_{\hat \imath}^k=0$. Since $\hat \imath \in \hat {\cal A}$ we have
$$g_{\hat \imath}(x^k)+\tau\geq 0 \;\; \mbox{ and } \; -(g_{\hat \imath}(x^k)-\tau)\geq 0,$$
which imply that
$$|\Phi_{\hat \imath}(x^k)|=0.$$
By the continuity of $\Phi(\cdot)$ and the fact that
\begin{equation}\nonumber\label{lim-x}\lim_{k \to \infty, k \in \hat K}x^k=x^\star,\end{equation}
 we get a contradiction with \eqref{cont0}.
\end{enumerate}
\par\medskip
Suppose now that $\hat \imath \in \hat {\cal N}$ for $k$ sufficiently large. We can choose a further subsequence $\{x^k\}_{\tilde K}$ with $\tilde K \subseteq \hat K$ such that
 $$|\Phi_{\bar \imath}(x^k)| = \max_{i\in \hat {\cal N}} |\Phi_i(x^k)|, \quad \forall \ k \in \tilde K.$$
Hence,
\begin{equation}\label{cont1}
|\Phi_{\bar \imath}(x^k)|\geq |\Phi_{\hat \imath}(x^k)|, \quad \forall \ k \in \tilde K,
\end{equation}
which, by  continuity of $\Phi(\cdot)$, implies
\begin{equation}\label{assur-2}
|\Phi_{\bar \imath}(x^\star)|\geq |\Phi_{\hat \imath}(x^\star)|.
\end{equation}
Furthermore, the instructions of Algorithm \texttt{FAST-BCDA} guarantee that,  for all $k \in \tilde K$, a set of indices $I_{h_k}$ exists such that
$$\bar \imath \in I_{h_k} \subseteq \bar {\cal N}^k_{ord}.$$
For all $k \in \tilde K$, Algorithm \texttt{FAST-BCDA}  produces a vector $y^{h_k, k}$ by minimizing Pro\-blem~(\ref{l2l1}) with respect to
all the variables whose indices belong to $I_{h_k}$. Therefore, the point $y^{h_k, k}$  satisfies
\begin{equation}\nonumber
 |\Phi_{\bar \imath}(y^{h_k, k})|=0.
\end{equation}
Furthermore, by \eqref{convy}, the continuity of $\Phi(\cdot)$, and taking into account \eqref{assur-2}, we can write
$$
0=|\Phi_{\bar \imath}(x^\star)|\geq |\Phi_{\hat \imath}(x^\star)|,
$$
which contradicts (\ref{cont0}).
\hfill $\Box$
\par\medskip\noindent

\section{Theoretical results related to the convergence rate analysis}\label{appconrate}
Here, following the ideas in \cite{LuoTseng}, we prove that the convergence rate of \texttt{FAST-BCDA} 
with 1-dimensional blocks (namely \texttt{FAST-1CDA}) is linear. 
First, we try to better analyze the indices in the set  ${\cal  N}(x)$ by introducing the following two sets:
\begin{equation}\label{eq:N^+(x)}
{\cal N}^+(x)=\{i \in {\cal N}(x): \ g_i(x)\leq 0  \},\ \mbox{and} \ {\cal N}^-(x)=\{i \in {\cal N}(x): \ g_i(x)> 0  \}. 
\end{equation}
We further introduce the sets:
\begin{equation}\label{eq:N^+(x^*)}
{ \cal E}^+(x^\star)=\{i: x^\star_i\geq0, \ g_i(x^\star)=-\tau  \},\ \mbox{and} \
{\cal  E}^-(x^\star)=\{i:\ x^\star_i\leq 0, \ g_i(x^\star)=\tau \},
\end{equation}
which satisfy the following equality:
$${ \cal E}(x^\star)={ \cal E}^+(x^\star)\cup { \cal E}^-(x^\star)= {\cal \bar N}(x^\star)\cup \{i: x^\star_i=0, \ |g_i(x^\star)|=\tau\}.$$
We further notice that 
\begin{equation}\label{indicesuse}
 I={\cal \bar A}^+(x^\star)\cup { \cal E}^+(x^\star) \cup { \cal E}^-(x^\star).
\end{equation}

We can finally prove a result that will be used in the convergence analysis:
\begin{theorem}\label{signs}
  Let $x^\star\in \R^n$ be a solution of Problem~\eqref{l2l1}. Then, there
exists a neighborhood of $x^\star$ such that, for each $x$ in this neighborhood, we have
\begin{eqnarray}
{\cal N}^+(x)&\subseteq& { \cal E}^+(x^\star), \label{s1}\\
{\cal N}^-(x)&\subseteq& { \cal E}^-(x^\star). \label{s2}
\end{eqnarray}
\end{theorem}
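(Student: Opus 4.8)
The plan is to obtain Theorem~\ref{signs} as a short consequence of Theorem~\ref{activeest} combined with the continuity (in fact affinity) of the gradient map $g(x)=A^\top(Ax-b)$; no additional hypothesis such as strict complementarity is needed, since only the inclusion part of \eqref{stcmp} is used.

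First I would translate the relevant half of \eqref{stcmp} into a statement about ${\cal N}(x)$. Taking complements in $I$, the inclusion ${\cal \bar A}^+(x^\star)\subseteq{\cal A}(x)$ is equivalent to ${\cal N}(x)\subseteq I\setminus{\cal \bar A}^+(x^\star)$, so Theorem~\ref{activeest} furnishes a neighborhood $U_1$ of $x^\star$ on which this holds. Invoking the decomposition \eqref{indicesuse}, namely $I={\cal \bar A}^+(x^\star)\cup{\cal E}^+(x^\star)\cup{\cal E}^-(x^\star)$, I conclude that every index in ${\cal N}(x)$ belongs to ${\cal E}^+(x^\star)\cup{\cal E}^-(x^\star)$ for $x\in U_1$.

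The second ingredient is a sign-separation argument. By the definitions in \eqref{eq:N^+(x^*)}, $g_i(x^\star)=-\tau<0$ whenever $i\in{\cal E}^+(x^\star)$ and $g_i(x^\star)=\tau>0$ whenever $i\in{\cal E}^-(x^\star)$; these two index sets are disjoint because $\tau>0$. Since $g$ is affine and the index set is finite, there is a neighborhood $U_2$ of $x^\star$ such that, for all $x\in U_2$, $g_i(x)<0$ for every $i\in{\cal E}^+(x^\star)$ and $g_i(x)>0$ for every $i\in{\cal E}^-(x^\star)$. Setting $U=U_1\cap U_2$ and reasoning by elimination finishes the argument: for $x\in U$ and $i\in{\cal N}^+(x)$ we have $g_i(x)\le 0$, which rules out $i\in{\cal E}^-(x^\star)$ and hence forces $i\in{\cal E}^+(x^\star)$, giving \eqref{s1}; the inclusion \eqref{s2} follows symmetrically from $g_i(x)>0$ for $i\in{\cal N}^-(x)$.

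The proof is essentially routine once Theorem~\ref{activeest} is in hand, so I do not expect a genuine obstacle. The only points requiring care are the verification of the decomposition \eqref{indicesuse} from the optimality conditions \eqref{optcondorpr}, and the observation that ${\cal E}^+(x^\star)\cap{\cal E}^-(x^\star)=\emptyset$, which is precisely what makes the sign of $g_i(x)$ sufficient to assign each estimated non-active index unambiguously to exactly one of the two target sets.
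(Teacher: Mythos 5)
Your proposal is correct and uses essentially the same ingredients as the paper's own proof: the inclusion ${\cal \bar A}^+(x^\star)\subseteq{\cal A}(x)$ from Theorem~\ref{activeest} together with the decomposition \eqref{indicesuse} to place ${\cal N}(x)$ inside ${\cal E}^+(x^\star)\cup{\cal E}^-(x^\star)$, and then continuity of $g$ with $g_i(x^\star)=\mp\tau\neq 0$ to separate the two pieces by the sign of $g_i(x)$. The paper phrases this as a contradiction argument along a sequence of shrinking neighborhoods, whereas you give the direct version, but the mathematical content is identical.
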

\begin{proof}
Let us assume there exists a sequence $\{\eps^k \}$, $\eps^k \to 0$, 
a related sequence of neighborhoods $\{{\cal B}(x^\star,\eps^k)\}$ and a sequence of 
points $\{x^k\}$ such that $x^k\in {\cal B}(x^\star,\eps^k)$ for all $k$, satisfying the following:
$${\cal N}^+(x^k)\not \subseteq { \cal E}^+(x^\star).$$
Then, since the number of indices is finite, there exist subseqences $\{\eps^k \}_K$ and $\{{\cal B}(x^\star,\eps^k)\}_K$ such that
an index $\hat \imath$ can be found, satisfying the following:
$$\hat \imath \in {\cal N}^+(x^k),\quad  \hat \imath \notin {\cal E}^+(x^\star).$$
From Theorem \ref{activeest}, for $k$ sufficiently large,
$${\cal N}(x^k)\subseteq { \cal E}^+(x^\star)\cup { \cal E}^-(x^\star).$$
Therefore, we have that
$$\hat \imath \in {\cal E}^-(x^\star) \;\mbox{ and } \; g_{\hat \imath}(x^\star) = \tau.$$
By continuity of the gradient, $g_{\hat \imath}(x^k)>0$ for $k$ sufficiently large.
On the other hand, since ${\hat \imath} \in {\cal N}^+(x^k)$, we have $g_{\hat \imath}(x^k)\leq 0$. 
This gives a contradiction, and proves \eqref{s1}.
A similar reasoning can be used for proving \eqref{s2}.
\hfill
\end{proof}
\par\medskip

Finally, we report another theoretical result that is used in the convergence rate analysis.
 
 \begin{proposition}\label{resultuseful}
  Let Assumption \ref{ass3} hold. Then, there exists a $\bar k$ such that\\
  \begin{itemize}
   \item[ a)] $x_i^k=0, \quad i \in {\cal \bar A}^+(x^\star)$;\\
   \item[ b)] $-sign(g_i(x^\star))~x_i^k\geq 0, \quad i \in {\cal E}(x^\star)$;\\
  \end{itemize}
  for all $k \geq \bar k$.
 \end{proposition}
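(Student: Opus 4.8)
The plan is to prove the two claims separately, each leaning on the convergence $x^k\to x^\star$ (Assumption \ref{ass3}) together with the identification machinery already available.

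For part (a), I would invoke Theorem \ref{activeest}: every $i\in{\cal \bar A}^+(x^\star)$ satisfies strict complementarity, so there is a neighborhood of $x^\star$ on which ${\cal \bar A}^+(x^\star)\subseteq{\cal A}(x)$. Since $x^k\to x^\star$, there is an index after which ${\cal \bar A}^+(x^\star)\subseteq{\cal A}^k$. The scheme of \texttt{FAST-1CDA} sets $y^{0,k}_{{\cal A}^k}=0$ and thereafter updates only indices in $\bar {\cal N}^k_{ord}\subseteq{\cal N}^k$, which is disjoint from ${\cal A}^k$; hence $x_i^{k+1}=0$ for every $i\in{\cal A}^k$. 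Shifting the index by one gives $x_i^k=0$ for all large $k$ and all $i\in{\cal \bar A}^+(x^\star)$, which is (a).

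For part (b) I would first record that the intermediate iterates converge as well: the telescoping sufficient-decrease inequalities \eqref{app_prop1}--\eqref{app_lemma2} (from Propositions \ref{prop1} and \ref{lemma2}) force $\|y^{h,k}-x^k\|\to 0$, so under Assumption \ref{ass3} one has $y^{h,k}\to x^\star$ for every $h$. Fix $i\in{\cal E}^+(x^\star)$, so $x_i^\star\ge 0$ and $g_i(x^\star)=-\tau$, and recall $H_{ii}\ge\sigma>0$ from Assumption \ref{ass2} with $r=1$. Whenever $i$ is selected in a block of \texttt{FAST-1CDA}, $x_i$ is updated by the scalar soft-threshold solving the one-dimensional subproblem, whose closed form shows that the minimizer $w^*$ is negative only if $H_{ii}\,y_i^{h-1,k}<g_i(y^{h-1,k})-\tau$. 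Passing to the limit, the left side tends to $H_{ii}\,x_i^\star\ge 0$ while the right side tends to $-2\tau<0$; hence for $k$ large this inequality fails and any selection of $i$ produces $w^*\ge 0$.

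It remains to promote this to all iterates. For $k$ large the value $x_i^{k+1}$ is either $0$ (if $i\in{\cal A}^k$), or the soft-threshold value $w^*\ge 0$ (if $i$ is selected), or the unchanged $x_i^k$ (if $i\in{\cal N}^k$ is not selected); these three branches are exhaustive. Thus $x_i^k\ge 0$ implies $x_i^{k+1}\ge 0$, so the sign constraint is absorbing. Moreover, if $x_i^k$ stayed negative for all large $k$, only the third branch could ever occur, so $x_i^k$ would be eventually constant and negative, contradicting $x_i^k\to x_i^\star\ge 0$. Hence $x_i^k\ge 0$ from some index on; since $g_i(x^\star)=-\tau$ this is exactly $-sign(g_i(x^\star))\,x_i^k\ge 0$. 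The case $i\in{\cal E}^-(x^\star)$ is the mirror image ($g_i(x^\star)=\tau$, $x_i^\star\le 0$, with a positive $w^*$ now ruled out), and taking the largest of the finitely many thresholds produces a single $\bar k$ valid for both (a) and (b). The delicate step is this last one: the identification results only force the correct sign when the algorithm actually touches $i$, so one must exclude the degenerate scenario (arising precisely when $x_i^\star=0$ and $|g_i(x^\star)|=\tau$, i.e.\ strict complementarity fails) in which $i$ retains the wrong sign while never being selected; the absorbing-sign argument combined with convergence to $x^\star$ is what closes this gap.
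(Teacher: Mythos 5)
Your proof is correct and follows essentially the same route as the paper's: part (a) via the identification property of Theorem~\ref{activeest} together with the fact that \texttt{FAST-1CDA} zeroes the indices in ${\cal A}^k$ and never selects them for update, and part (b) via the closed-form soft-threshold update combined with continuity of $g$ and the convergence $x^k\to x^\star$. Your explicit absorbing-sign and frozen-variable case analysis makes rigorous the step that the paper compresses into a ``without loss of generality'' subsequence extraction around \eqref{transf}, but the underlying argument is the same.
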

 \par\medskip\noindent
 \begin{proof} a).  Recalling \eqref{stcmp}, for $k$ sufficently large, we have
  $${\cal \bar A}^+(x^\star) \subseteq {\cal A}^k.$$
  Therefore,  by taking into account the steps of \texttt{FAST-1CDA} Algorithm, we have
  \begin{equation}\label{acttozero}
   x_i^{k+1}=0, \quad i \in {\cal \bar A}^+(x^\star).  
   \end{equation}
  Furthermore, by continuity of $g$ and \eqref{convtoopt}, we obtain
  \begin{equation}\label{condstrc}
   \begin{array}{l}
  \tau+g_i(x^{k+1})>0, \quad \mbox{and} \quad \tau-g_i(x^{k+1})>0,
  \end{array}
  \end{equation}
and we can write $$i \in {\cal A}^{k+1}.$$
Hence, \eqref{acttozero} and \eqref{condstrc}  still hold for $x^{k+2}$, and so on.

b). Let us consider an index $i \in {\cal E}(x^\star)$. By contradiction, we assume that there exists 
a subsequence $K=\{k_1, k_2, \dots\}$ 
such that 
\begin{equation}\label{contrsign}
-sign(g_i(x^\star))~x_i^k< 0,
\end{equation}
for all $k \in K$. Without any loss of generality, we can consider
another subsequence $\hat K=\{\bar k_1, \bar k_2, \dots\}$ related to $K$, such that $i \in {\cal N}^{\bar k_j}$ and
\begin{equation}\label{transf}
 x_i^{k_j}= - sign\left(g_i(x^{\bar k_j})- H_{ii} x_i^{\bar k_j} \right)\ \frac{\max \left \{\left |g_i(x^{\bar  k_j})- H_{ii} x_i^{\bar k_j}\right|-\tau,0\right \}}{H_{ii}},
\end{equation}
for all $ k_j \in  K$ and $\bar k_j \in \hat K.$ 

If $i \in {\cal E}(x^\star)\setminus {\cal \bar N}(x^\star)$, when $j$ is 
sufficiently large, we  
have by  continuity of $g$, \eqref{convtoopt} and \eqref{transf}
$$
-sign(g_i(x^\star))~x_i^{k_j}\geq 0,
$$
which contradicts \eqref{contrsign}. 

If $i \in {\cal \bar N}(x^\star)$,  when $j$ is 
sufficiently large, we  
have by  continuity of $g$, \eqref{eq:N(x^*)} and \eqref{convtoopt} 
$$
-sign(g_i(x^\star))~x_i^{k_j}\geq 0,
$$
and again we get a contradiction with \eqref{contrsign}. 
 \hfill
 \end{proof}
\par\medskip

Now, we prove that the algorithm converges at linear rate.
\par\smallskip
\emph{Proof of  Theorem \ref{mainconvres}.}First of all, for ease of notation we set ${\cal \bar A}^+={\cal \bar A}^+(x^\star)$,  and ${\cal E}={\cal  E}(x^\star)$. Without any loss of generality, we can assume $|\bar{\cal N}_{ord}^k|=1$ for all $k$.
We then notice that  the objective function $f(x)$ can be rewritten as follows:
$$f(x)=q(x)+\tau \sum_{i \in {\cal \bar A}^+} sign(x_i)x_i+\tau \sum_{i \in {\cal E}} sign(x_i)x_i.$$
We further introduce the function 
$$F(x)=q(x)-\sum_{i \in {\cal E} } sign(g_i(x^\star)) x_i.$$
By taking into account Proposition \ref{resultuseful}, we have, for $k$ sufficiently large, 
\begin{equation}\label{propeq}
\begin{array}{l}
f(x^k)=F(x^k), \quad \mbox{and} \quad f(x^\star)=F(x^\star).
\end{array} 
\end{equation}
Furthermore, when $k$ is sufficiently large, by definition of ${\cal \bar A}^+$ and ${\cal E}$, and recalling again Proposition  
\ref{resultuseful}, we can write 
\begin{equation}\label{eq1pr}
x^k_{\bar{\cal A}^+}=x^\star_{\bar{\cal A}^+},
\end{equation}
\begin{equation}\label{eq2pr}
\nabla_i F(x^\star)=0, \quad \forall~i \in {\cal E}. 
\end{equation}
Then, by considering \eqref{indicesuse}, \eqref{eq1pr} and \eqref{eq2pr}, it follows
\begin{eqnarray*}
F(x^k)-F(x^\star)&=&\nabla F(x^\star)^\top(x^k-x^\star)+\frac{1}{2}(x^k-x^\star)^\top\nabla^2 F(x^\star)(x^k-x^\star) \\
                 &=&\frac{1}{2}(x^k-x^\star)^\top\nabla^2 F(x^\star)(x^k-x^\star)\leq \frac{\lambda_{max}(\nabla^2 F(x^\star))}{2} \|x^k-x^\star\|^2,\\              
\end{eqnarray*}
and, taking into account \eqref{propeq}, we can write
\begin{equation}\label{ineqkstar}
 f(x^k)-f(x^\star)\leq \rho \|x^k-x^\star\|^2,
\end{equation}
with $\rho>0$. Then, recalling Theorem \ref{activeest} and \ref{signs}, for $k$ sufficiently large the problem we actually solve is
\begin{equation}\label{problemmod2}
\begin{array}{ll}
\min &\tilde F(x)=\displaystyle\frac{1}{2}\|Ax-b\|^2-\tau~sign(g_{{\cal N}^k}(x^k))^\top x_{{\cal N}^k}\\
&x_{{\cal A}^k}=0\\
&-sign(g_i(x^k))~x_i\geq 0 \quad i \in {{\cal N}^k}.
\end{array}
\end{equation}
Now, let $y^{0,k}$ be the point obtained at Step 4 of Algorithm \ref{fig:FAST-CDA} (i.e. after fixing to zero the active variables) and $y^{0,k}_s$ the component that most violates 
condition \eqref{tsengest} in the non-active set.   We notice that finding the most violating variable according to condition \eqref{tsengest} is equivalent, when considering Problem \eqref{problemmod2}, to get the component 
that most violates the following condition
$$|x_i-[x_i-\nabla_i \tilde F(x)]_+|,$$
see \cite{LuoTseng} for further details.
Thus, we can write
\begin{eqnarray}\label{inequalitya}
 \frac{1}{\sqrt{|{{\cal N}^k}|}}\|y^{0,k}-[y^{0,k}-\nabla \tilde F(y^{0,k})]_+\|&\leq& |y^{0,k}_s-[y^{0,k}_s-\nabla_s \tilde F(y^{0,k})]_+| \nonumber\\ 
 & = & |y^{0,k}_s-[y^{0,k}_s-\nabla_s \tilde F(y^{0,k})]_+ -x^{k+1}_s+[x^{k+1}_s-\nabla_s \tilde F(x^{k+1})]_+| \nonumber \\
  &\leq& 2|y^{0,k}_s-x^{k+1}_s|+|\nabla_s \tilde F(y^{0,k})-\nabla_s \tilde F(x^{k+1})| \nonumber\\
  &\leq& 2\|y^{0,k}-x^{k+1}\|+\|\nabla \tilde F(y^{0,k})-\nabla \tilde F(x^{k+1})\|\nonumber \\
  &\leq& M\|y^{0,k}-x^{k+1}\|=M \|x^k-x^{k+1}\|_{{\cal N}^k}, 
\end{eqnarray}
where  $[\cdot]_+$ is the projection on the set of inequalities in Problem \eqref{problemmod2}, and \mbox{$M=\max \{2, L\}$}, with $L$ Lipschitz 
constant of $\nabla \tilde F$. By using Propositions \ref{prop1} and \ref{lemma2}, we can also write:
 \begin{eqnarray}\label{inequalityb}
  f(x^k)-f(x^{k+1})&\geq& \delta \|x^{k+1}-x^k\|^2 
 \end{eqnarray}
with $\delta>0$.
By taking into account inequality \eqref{inequalitya} and the definition of $y^{0,k}$, we can write, for $k$ sufficiently large, 
\begin{eqnarray}\label{distance2}
  \|x^{k+1}-x^k\|^2 &=& \|x^{k}-x^\star\|^2_{{\cal A}^k}+\|y^{0,k}-x^{k+1}\|^2_{{\cal N}^k}\nonumber\\
                  &\geq& \|x^{k}-x^\star\|^2_{{\cal A}^k}+ \frac{1}{M\sqrt{|{{\cal N}^k}|}}\|y^{0,k}-[y^{0,k}-\nabla \tilde F(y^{0,k})]_+\|^2.
                  \end{eqnarray}
Now, considering Theorem 2.1 in \cite{LuoTseng} we have, for $k$ sufficiently large, 
$$\sigma\|y^{0,k}-[y^{0,k}-\nabla \tilde F(y^{0,k})]_+\|\geq \|y^{0,k}-x^\star\|=\|x^k-x^\star\|_{{\cal N}^k},$$
with $\sigma>0$. Therefore, by taking into account inequality \eqref{distance2}, we can write
\begin{equation}\label{inequalityc}
 \|x^{k+1}-x^k\|^2 \geq \|x^{k}-x^{\star}\|^2_{{\cal A}^k}+\gamma\|x^{k}-x^{\star}\|^2_{{\cal N}^k}\geq \tilde\gamma\|x^{k}-x^{\star}\|^2,
\end{equation}
with $\tilde \gamma >0$. By combining inequalities \eqref{ineqkstar}, \eqref{inequalityb} and \eqref{inequalityc}, we can write
\begin{equation}\label{inequalityf}
f(x^k)-f(x^\star)\leq c_1 \left ( f(x^k)- f(x^{k+1}) \right ),
\end{equation}
with $c_1>1$. After rearranging the terms in \eqref{inequalityf}, we obtain 
$$f(x^{k+1})-f(x^\star)\leq  c_2 \left ( f(x^k)-f(x^\star) \right )$$
with $c_2= \left(1-\frac{1}{c_1}\right )<1.$ Then, $\{f(x^k)\}$ converges at least linearly to $f^\star$. 

Finally, by using \eqref{inequalityb} and Lemma 3.1 in \cite{LuoTseng} we get that the sequence $\{x^k\}$ 
converges at least linearly to $x^\star$.
\hfill$\Box$
\par\medskip\noindent

\end{document}